\newtheorem{theorem}{Theorem}[section]
\newtheorem{lemma}[theorem]{Lemma}
\newtheorem{main-theorem}[theorem]{Theorem}
\newtheorem*{main-theorem*}{Main Theorem}
\newtheorem*{triangulation-theorem*}{Triangulation Theorem}
\newtheorem{corollary*}{Corollary}
\newtheorem*{problem*}{Problem}
\theoremstyle{definition}
\newtheorem{remark}[theorem]{Remark}
\newtheorem{observation}[theorem]{Observation}
\newtheorem*{question*}{Question}
\newtheorem*{notation*}{Notation}
\newtheorem{definition}[theorem]{Definition}
\newcommand{\rad}{\operatorname{rad}}
\newcommand{\op}{\operatorname{op}}
\newcommand{\La}{\Lambda}
\newcommand{\ba}{\bar{\alpha}}
\newcommand{\vf}{\varphi}
\newcommand{\tikzAngleOfLine}{\tikz@AngleOfLine}
\def\tikz@AngleOfLine(#1)(#2)#3{%
\pgfmathanglebetweenpoints{%
\pgfpointanchor{#1}{center}}{%
\pgfpointanchor{#2}{center}}
\pgfmathsetmacro{#3}{\pgfmathresult}%
}
\begin{document}

\title{Weighted surface algebras: general version. Corrigendum}

{\def\thefootnote{}
\footnote{The research was supported by the program Research in Pairs by the MFO in 2018, and also by
the Faculty of Mathematics and Computer Science of the Nicolas Copernicus University in Toru\'{n}.}
}

\author[K. Edrmann]{Karin Erdmann}
\address[Karin Erdmann]{Mathematical Institute,
   Oxford University,
   ROQ, Oxford OX2 6GG,
   United Kingdom}
\email{erdmann@maths.ox.ac.uk}

\author[A. Skowro\'nski]{Andrzej Skowro\'nski}
\address[Andrzej Skowro\'nski]{Faculty of Mathematics and Computer Science,
   Nicolaus Copernicus University,
   Chopina~12/18,
   87-100 Toru\'n,
   Poland}
\email{skowron@mat.uni.torun.pl}

\begin{abstract}
This amends the definition of general weighted surface algebras.

\medskip
\noindent
\textit{Keywords}
Syzygy, Periodic algebra, Self-injective algebra, 
	Symmetric algebra, Surface algebra,Tame algebra

\noindent
\textit{2010 MSC:} 16D50, 16E30, 16G20, 16G60, 16G70
\end{abstract}

%\linenumbers

\maketitle

\section{Introduction}
In \cite{WSA-GV} we  generalize the original definition of weighted surface algebras in \cite{WSA} by allowing the possibility that
arrows might not be part of the Gabriel quiver, which  gives a much larger class of algebras. 
This means  that the zero relations need modification, to  make sure that the algebras
are symmetric, and of the appropriate dimension.
We found recently that we had missed one necessary modification for the zero relations. Here 
we give the correct definition, and revise the parts of \cite{WSA-GV}  which 
are affected by this modification.

\section{Weighted surface algebras}\label{sec:context}

Recall that a quiver is a quadruple $Q = (Q_0, Q_1, s, t)$ 
where $Q_0$ is a finite set of vertices, 
$Q_1$ is a finite set of arrows, and 
where $s, t$ are maps $Q_1\to Q_0$ associating 
to each arrow $\alpha\in Q_1$ its source $s(\alpha)$  
and its target $t(\alpha)$. 
We say that
$\alpha$ starts at $s(\alpha)$ and ends at $t(\alpha)$. 
We assume throughout that any quiver is connected. 
Moreover, we fix an algebraically closed field $K$.

Denote by $KQ$ the path algebra of $Q$ over $K$. 
The underlying space has basis the set of all paths in $Q$. 
Let $R_Q$ be the ideal of $KQ$ generated by all paths of length $\geq 1$. 
For each vertex $i$, let $e_i$ be the path of length zero at $i$, then 
the $e_i$ are pairwise orthogonal idempotents, and their sum is the identity of $KQ$. 
We will consider algebras of the form $A=KQ/I$ where $I$ is an ideal of $KQ$ 
which contains $R_Q^m$ for some $m\geq 2$, so that
the algebra is finite-dimensional and basic.
The Gabriel quiver $Q_A$ of $A$ is then the full subquiver of $Q$ 
obtained from $Q$ by removing all arrows $\alpha$ 
%which belong to the ideal $I$.
with $\alpha + I \in R_Q^2 + I$.

A quiver $Q$ is \emph{$2$-regular} if for each vertex $i\in Q_0$ 
there are precisely two arrows starting
at $i$ and two arrows ending at $i$. 
Such a quiver has an involution
on the arrows, $\alpha \mapsto \ba$,  
such that for each arrow $\alpha$, 
the arrow $\ba$ is the arrow $\neq \alpha$ such that $s(\alpha) = s(\ba)$. 
 
  A \emph{biserial  quiver} \  is a pair $(Q, f)$ where $Q$ 
is a  (finite) connected 2-regular quiver,  with at least two vertices, and where 
$f$ is a fixed 
permutation of the arrows such that  $t(\alpha) = s(f(\alpha))$ 
for each arrow $\alpha$. 
The permutation $f$ uniquely determines a permutation $g$ of the arrows, 
defined by $g(\alpha) := \overline{f(\alpha)}$ for any arrow $\alpha$. 
A biserial quiver  $(Q, f)$ 
is a \emph{triangulation quiver}  if $f^3$ is the identity, so that   cycles of $f$ have length $3$ or $1$.

We assume throughout that $(Q, f)$ is a triangulation quiver.
%We fix an algebraically closed field $K$, and we 
We
introduce some notation. 
For each arrow $\alpha$, we fix 
\begin{align*} 
m_{\alpha}\in \mathbb{N}^* &&& \mbox{ a weight, constant on $g$-cycles, and }\cr
c_{\alpha} \in K^*  &&&  \mbox{ a parameter,  constant on $g$-cycles, and define }\cr
n_{\alpha}:=   &&&  \mbox{ the length of the $g$-cycle of $\alpha$, } \cr
B_{\alpha}:=  \alpha g(\alpha)\ldots g^{m_{\alpha}n_{\alpha}-1}(\alpha) &&&   \mbox{ the path  along the $g$-cycle of $\alpha$ 
                     of length $m_{\alpha}n_{\alpha}$}, \cr
 A_{\alpha}:=  \alpha g(\alpha)\ldots g^{m_{\alpha}n_{\alpha}-2}(\alpha) &&&   \mbox{ the path  along the $g$-cycle of $\alpha$ 
                     of length $m_{\alpha}n_{\alpha}-1$.}
\end{align*}

If $p$ is a monomial in $KQ$ we write $|p|$ for the length of $p$. 
For elements $p, q\in \La$ we write $p\equiv q$ if $p=\lambda q$ for some non-zero scalar $\lambda \in K$.

\begin{definition}\label{def:virtual} We say that an arrow $\alpha$ of
	$Q$ is virtual if $m_{\alpha}n_{\alpha}=2$. Note that
	this condition is preserved under the permutation $g$, and that
	virtual arrows form $g$-orbits of sizes 1 or 2. 
\end{definition}

%\medskip

\noindent {\bf Assumption} \label{ass}\normalfont 
For the general %WSA, 
weighted surface algebra
we assume that the following conditions are
satisfied:
\begin{enumerate}[(1)]
 \item
	$m_{\alpha}n_{\alpha}\geq 2$ for all arrows $\alpha$,
 \item
	$m_{\alpha}n_{\alpha}\geq 3$ for all arrows $\alpha$ such
	that $\ba$ is virtual and  $\ba$ is not a loop,
 \item
	$m_{\alpha}n_{\alpha}\geq 4$ for all arrows $\alpha$ 
	such that $\ba$ is virtual and  $\ba$ is a loop.
\end{enumerate}
\normalfont
Condition (1) is a general assumption, and (2) and (3) are needed to eliminate
	two small algebras (see \cite{WSA-GV}).  In particular we exclude the
	possibility that
	both arrows starting at a vertex are virtual, and also
	that both arrows ending at a vertex are virtual.
The  Gabriel quiver $Q_{\La}$ of
$\La$ is obtained from $Q$ by removing
all virtual arrows.

The revised definition of a weighted surface algebra
is now as follows.

\begin{definition} \label{def:2.2}
The algebra $\La = \La(Q, f, m_{\bullet}, c_{\bullet}) = KQ/I$ 
is a weighted surface algebra if
$(Q, f)$ is a triangulation quiver, with $|Q_0| \geq 2$, 
and $I= I(Q, f, m_{\bullet}, c_{\bullet})$ is the ideal of 
$KQ$ generated by:
\begin{enumerate}[\rm(1)]
 \item
	$\alpha f(\alpha) - c_{\ba}A_{\ba}$ for all arrows
	$\alpha$ of $Q$,
 \item
	$\alpha f(\alpha) g(f(\alpha))$ \ for all arrows $\alpha$ of $Q$
	unless  $f^2(\alpha)$ is virtual, or unless $f(\ba)$ is virtual and $m_{\ba}=1, \  n_{\ba}=3$. 
 \item
	$\alpha g(\alpha)f(g(\alpha))$ for all arrows $\alpha$ of $Q$
	unless $f(\alpha)$ is virtual, or unless $f^2(\alpha)$ is virtual and $m_{f(\alpha)}=1, \  n_{f(\alpha)}=3$. 
\end{enumerate}
\end{definition}

\medskip

We assume in this note throughout that $|Q_0|\geq 3$. The details for the only quiver with two vertices are
discussed in 3.1 of \cite{WSA-GV} (and other places), and no correction is needed.
Below we will clarify the exceptions in (2) and (3) of Definition \ref{def:2.2}. 
We observe that with $\La$, also $\La^{\op}$ is a weighted surface algebra, and we will use this to reduce calculations.

\subsection{Some combinatorics related to $g$-cycles of length $2$ or $3$}

We require  the element $A_{\alpha}$ to be non-zero,  and that  $B_{\alpha}$ 
spans the socle of $e_i\La$.  
The problem we had overlooked originally arises when a 3-cycle of $g$ and a 2-cycle with virtual arrows pass through the same vertex.
We will first discuss some combinatorics related to quivers related to 3-cycles and 2-cycles of $g$.

\subsubsection{Virtual arrows}
As discussed in \cite{WSA-GV}, virtual arrows cannot come too close together.
We recall the properties we need and will use frequently:

(1) The arrow  $\alpha$ is virtual 
if and only if $f^2(\ba)$ is virtual. This holds since $f^2(\ba)$ is equal to $g^{-1}(\alpha)$. 

(2) If $\alpha$ is virtual then no other arrow in the $f$-cycle of $\alpha$ is virtual.

(3) If $\alpha$ is virtual then $\ba$ is not virtual.

\bigskip

\subsubsection{Cycles of $g$ of length three}

Recall that $Q$ has at least three vertices. This means that {\it a $g$-cycle of 
length three in $Q$ cannot contain a loop}.  We will used this tacitly in the following. 
We consider $(Q, f)$ where a 3-cycle of $g$ has a common vertex with a 2-cycle of $g$.  This occurs in a subquiver $Q'$ of the form
$$
  \xymatrix@R=3.pc@C=1.8pc{
    & c 
    \ar[rd]^{}
    \ar@<-.5ex>@/_10ex/[dd]_{\beta:=g(\ba)}
    \\
    \bullet
    \ar[ru]^{\ba}
    \ar@<-.5ex>[rr]_{\alpha}
    && \bullet
    \ar@<-.5ex>[ll]_{}
 \ar[ld]^{}
    & w \ar@/_/[ull]_{f^2(\beta)} 
    \\
    & d
   \ar[lu]^{}
    \ar@/_/[urr]_{f(\beta)}
}\leqno{(Q')}
$$
%{\bf  There has to be an arrow  $\beta:= g(\ba): c \longrightarrow d$ and also arrows
%$f(\beta): d\to w$ and $f^2(\beta): w\to c$ (with a new vertex $w$).}

We have the following observations based on this diagram.

\begin{observation}
\label{obs:2.3}
  \begin{enumerate}[(1)]
	\itemsep0em
	\item 
		The permutation $g$ has  a cycle $(\ba \ \beta \ f^2(\alpha))$.
		Furthermore there are two  $g$-cycles through $c, d$, one of length $3$ and the other of length $d\geq 5$.
	\item 
		It is not possible that $n_{\ba}= n_{f(\ba)}=3$ and $n_{\alpha} (= n_{f^2(\ba)} ) = 2.$
	\item 
		It is not possible that $n_{f(\ba)}=2$ and $n_{\ba}= n_{f(\alpha)}=3$. 
  \end{enumerate}
  \end{observation}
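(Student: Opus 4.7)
The approach is to compute the two $g$-orbits through $c$ (and through $d$) in $Q'$ explicitly, using only $g(\gamma)=\overline{f(\gamma)}$ and $f^{3}=\id$, and then deduce (1)--(3) directly from these calculations.

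For the $3$-cycle in (1), I proceed step by step. By definition $g(\ba)=\beta$. Then $g(\beta)=\overline{f(\beta)}$, and since $f(\beta)$ is the arrow $d\to w$ of $Q'$, its $\overline{\cdot}$-partner is the second arrow out of $d$; using $f^{3}(\alpha)=\alpha$ one checks that this partner must start at $t(f(\alpha))=d$ and end at $s(\alpha)=u$, and so it equals $f^{2}(\alpha)$. Finally $g(f^{2}(\alpha))=\overline{f^{3}(\alpha)}=\overline{\alpha}=\ba$, closing the cycle.

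For the second $g$-orbit through $c$, the only incoming arrow not in the $3$-cycle is $f^{2}(\beta)$ and the only outgoing arrow not in the $3$-cycle is $f(\ba)$, so both must lie in it. Applying $g$ repeatedly gives the chain
\[
f^{2}(\beta)\ \longmapsto\ f(\ba)\ \longmapsto\ f(\alpha)\ \longmapsto\ f(\beta)\ \longmapsto\ \overline{f^{2}(\beta)}.
\]
Each step is $g(\gamma)=\overline{f(\gamma)}$ combined with $f^{3}=\id$; the second step additionally exploits $n_{\alpha}=2$, which forces $f^{2}(\ba)=g(\alpha)$ and therefore $\overline{f^{2}(\ba)}=f(\alpha)$. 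The first four arrows in the chain have pairwise distinct sources $w,c,v,d$, and $\overline{f^{2}(\beta)}\neq f^{2}(\beta)$ since $f^{2}(\beta)$ is not a loop (as $|Q_{0}|\geq 3$ forces the $3$-cycle on $u,c,d$ to be loop-free and $w\neq c$ in $Q'$). Hence the cycle contains at least five distinct arrows and has length $\geq 5$, completing (1).

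Parts (2) and (3) are then immediate corollaries. Under the hypothesis of (2), $n_{f(\ba)}=3$ would put $f(\ba)$ in a $3$-cycle of $g$, contradicting the length-$\geq 5$ conclusion for the cycle containing $f(\ba)$. Under the hypothesis of (3), the identity $g(f(\ba))=f(\alpha)$ coming from the chain above places $f(\ba)$ and $f(\alpha)$ on the same $g$-orbit, so $n_{f(\ba)}=n_{f(\alpha)}$; in particular $n_{f(\ba)}=2$ and $n_{f(\alpha)}=3$ cannot hold simultaneously. The main work lies in the chain computation above: compositions of $f$ and $\overline{\cdot}$ must be tracked carefully, especially the identification $f^{2}(\ba)=g(\alpha)$, which relies crucially on the presence of the $2$-cycle at $\alpha$; keeping track of source and target at each step and cross-checking with the figure $Q'$ makes the verification routine.
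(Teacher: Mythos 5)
Your treatment of part (1) is correct and simply makes explicit what the paper leaves as ``observations based on this diagram'': the computation $g(\ba)=\beta$, $g(\beta)=\overline{f(\beta)}=f^2(\alpha)$, $g(f^2(\alpha))=\overline{f^3(\alpha)}=\ba$ gives the $3$-cycle, and the chain $f^2(\beta)\mapsto f(\ba)\mapsto f(\alpha)\mapsto f(\beta)\mapsto\overline{f^2(\beta)}$ together with $\overline{f^2(\beta)}\neq f^2(\beta)$ forces the second cycle through $c,d$ to have length at least $5$; part (2) then follows exactly as you say, since its hypotheses ($n_\alpha=2$, $n_{\ba}=3$) reproduce the labelled configuration and put $f(\ba)$ on the long cycle. (A minor point: $\overline{\gamma}\neq\gamma$ holds for \emph{every} arrow, loops included, because $\overline{\gamma}$ is by definition the arrow different from $\gamma$ with the same source; so the discussion of whether $f^2(\beta)$ is a loop is unnecessary.)

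Part (3), however, has a genuine gap. The identity $g(f(\ba))=f(\alpha)$ that you import from your chain is not a consequence of the hypotheses of (3): it was obtained using $n_\alpha=2$, i.e.\ using that the $2$-cycle of $g$ passes through $s(\alpha)$, which is what forces $t(f(\ba))=t(\alpha)$ and hence $\overline{f^2(\ba)}=f(\alpha)$. In (3) the hypothesis is instead $n_{f(\ba)}=2$, so the $2$-cycle sits at the vertex $t(\ba)$ and contains $f(\ba)$, and nothing is assumed about $n_\alpha$; this is precisely how the paper uses (3) (in the proof of Lemma \ref{lem:3.2}, where only $m_{\ba}n_{\ba}=3$ and $n_{f(\alpha)}=m_{f(\alpha)}n_{f(\alpha)}=3$ are available), so (3) cannot be read as internal to the diagram with $n_\alpha=2$. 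In the configuration given by (3)'s hypotheses one has $g(f(\ba))=\overline{f^2(\ba)}$, an arrow starting at $t(f(\ba))$, whereas $f(\alpha)$ starts at $t(\alpha)$, a different vertex of that configuration; so your identity, and with it the claim $n_{f(\ba)}=n_{f(\alpha)}$, fails there (in fact $n_{f(\ba)}=2$ while $n_{f(\alpha)}\geq 5$). The statement is nevertheless true, and the correct route is the one you used for (2) after relabelling: $n_{f(\ba)}=2$ and $n_{\ba}=3$ mean a $2$-cycle and a $3$-cycle of $g$ share the vertex $t(\ba)$, so part (1) applies with $f(\ba)$ in the role of $\alpha$ and $g(\ba)=\overline{f(\ba)}$ in the role of $\ba$; in that copy of $Q'$ the vertex $d$ is $s(\alpha)$, the arrow playing the role of $f(\beta)$ is $\alpha$ and the arrow playing the role of $f^2(\beta)$ is $f(\alpha)$, so $\alpha$ and $f(\alpha)$ lie on the $g$-cycle of length at least $5$, contradicting $n_{f(\alpha)}=3$.
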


\medskip

\begin{remark}\label{rem:2.4}\normalfont 
A triangulation quiver $(Q, f)$ can have an arbitrary number of subquivers isomorphic to $Q'$. For example, start with a cyclic quiver which has
vertices $1, 2, \ldots, n$. For each $i$, attach a copy of $Q'$ at vertex $i$ by identifying $i$ with $w$. Then, using the $*$-construction introduced
in \cite{ESk-BGA}, one can extend this quiver to a triangulation quiver. (That is, one splits each of the arrows of the cyclic quiver and adds a new arrow, to
produce triangles, see 4.1 in \cite{ESk-BGA}).
\end{remark}

\bigskip

The following discusses relations (2) and (3) near a loop of the quiver.
They are an easy consequence of the conditions, we omit a proof.

\medskip

\begin{lemma}\label{lem:2.5} 
%Assume $\zeta = \alpha f(\alpha) g(f(\alpha))$ 
%and $\xi = \alpha g(\alpha) f(g(\alpha))$.\\
%(i) If $\alpha$ is a loop then $\zeta = 0 = \xi$.\\
%(ii) Suppose $f(\alpha)$ is a loop, then $\zeta = 0$.\\
%(iii) Suppose $g(\alpha)$ is a loop, then $\xi = 0$.
Assume $\zeta = \alpha f(\alpha) g(f(\alpha))$ 
and $\xi = \alpha g(\alpha) f(g(\alpha))$.
\begin{enumerate}[\rm(i)]
 \itemsep0em
 \item 
  If $\alpha$ is a loop then $\zeta = 0 = \xi$.
 \item 
  Suppose $f(\alpha)$ is a loop, then $\zeta = 0$.
 \item 
  Suppose $g(\alpha)$ is a loop, then $\xi = 0$.
\end{enumerate}
\end{lemma}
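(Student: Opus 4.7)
The plan is to derive the three vanishings from the generating relations of the ideal $I$ in Definition \ref{def:2.2}: the cubic relations (2) and (3) express each of the products $\zeta$, $\xi$ directly as elements of $I$, subject to ``unless'' exceptions involving virtual arrows, and in the exceptional subcases I would rewrite using the quadratic relations (1).

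For part (i), assume $\alpha$ is a loop at $i$. Applying relation (2) at $\alpha$ places $\zeta$ in $I$ unless one of the listed exceptions is triggered; in the generic case there is nothing to do. In an exceptional configuration I would substitute $\alpha f(\alpha) = c_{\ba} A_{\ba}$ via (1) to get $\zeta = c_{\ba}\, A_{\ba}\, g(f(\alpha))$, and then use the identity $g^{-1}(\ba) = f^2(\alpha)$ to recognise the last three arrows as a pattern $\delta\, g(\delta)\, f(g(\delta))$ for $\delta = g^{m_{\ba} n_{\ba} - 3}(\ba)$, whose vanishing is relation (3) applied at $\delta$. The treatment of $\xi$ is symmetric, with the roles of (2) and (3) exchanged.

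For parts (ii) and (iii), the two claims are swapped by the anti-isomorphism $\La \cong \La^{\op}$, which interchanges the permutations $f$ and $g$ and hence the relations (2) and (3); it therefore suffices to treat (ii). The subcase $f(\alpha) = \alpha$ reduces at once to (i). In the generic subcase, $2$-regularity at $v = t(\alpha)$ forces $f^2(\alpha) = \overline{f(\alpha)}$, so $g(f(\alpha)) = \overline{f^2(\alpha)} = f(\alpha)$ and $\zeta = \alpha\, f(\alpha)^2$; one then applies (1) at $\alpha$ to reduce the leading pair to $c_{\ba}A_{\ba}$, and observes that the resulting trailing subpath matches a relation of type (2) or (3).

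The principal obstacle will be the bookkeeping required to verify at each rewriting step that no further exception of (2) or (3) is triggered, together with the small $g$-cycle cases ($m_{\ba} n_{\ba} \leq 3$), which must be handled individually. These verifications are controlled by the Assumption in Section \ref{sec:context}, in particular clauses (2) and (3) which constrain how a virtual arrow can meet a loop, together with the properties of virtual arrows recalled in Section 2.1.1 (notably that virtual arrows cannot occur consecutively in an $f$- or $g$-cycle), between them excluding the configurations which would otherwise obstruct the chain of reductions.
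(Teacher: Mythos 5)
The paper itself gives no argument for Lemma \ref{lem:2.5} (it is declared ``an easy consequence of the conditions'' with the proof omitted), so your attempt can only be measured against what a complete argument must contain; measured that way, it has a genuine gap. The vanishing of $\zeta$ and $\xi$ cannot come from rewriting alone: whenever one of the ``unless'' clauses of Definition \ref{def:2.2}(2),(3) actually holds (and no loop is present), the cubic path is a \emph{non-zero} multiple of some $A_\beta$ (Lemmas \ref{lem:3.1}, \ref{lem:3.2}). So your fallback ``substitute via (1) and recognise a trailing relation of type (2)/(3)'' only proves anything after one has checked that, in the presence of the loop, no exception is triggered --- either for the relation at $\alpha$ itself or, in your rewritten form $c_{\ba}A_{\ba}\,g(f(\alpha))$, for the inner relation at $\delta=g^{-3}(\ba)$ (where one also needs $m_{\ba}n_{\ba}\geq 3$). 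That check is exactly where the hypotheses ``$\alpha$, $f(\alpha)$ or $g(\alpha)$ is a loop'' must enter, and it is the entire content of the lemma; your plan defers it as ``bookkeeping'' and attributes it vaguely to the Assumption and 2.1.1 without ever showing how the loop hypothesis excludes the exceptional configurations. Your combinatorial identities ($g^{-1}(\ba)=f^2(\alpha)$, $f^2(\alpha)=\overline{f(\alpha)}$ and $g(f(\alpha))=f(\alpha)$ in case (ii), the trailing-triple pattern) are correct, but they are not the hard part.

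Concretely, the missing verifications rest on $2$-regularity, injectivity of $f$, the standing hypothesis $|Q_0|\geq 3$ (so no loop lies on a $g$-cycle of length three) and 2.1.1, and once done they show that relations (2) and (3) apply at $\alpha$ with no exception, so the intermediate rewriting in (i) and (ii) is not even needed. For (i): if $f(\alpha)=\alpha$ then $g(\alpha)=\ba$, so $f^2(\alpha)=\alpha$ lies on a $g$-cycle of length $\geq 3$ and is not virtual, while $n_{\ba}=3$ would place the loop $\alpha$ on a $g$-cycle of length three; if $f(\alpha)=\ba$ then $f^2(\alpha)=f(\ba)$ virtual would force $\ba$ virtual (2.1.1(1)) inside the same $f$-cycle, contradicting 2.1.1(2). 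For (ii) (with $\alpha$ not a loop): $f^2(\alpha)=\overline{f(\alpha)}$ virtual would force $g(\ba)=f^2(\alpha)$, hence $f(\ba)=f(\alpha)$, impossible; and $n_{\ba}=3$ would force $g(\ba)=\alpha$, so $\ba$ would be a loop and $Q$ the two-vertex quiver, excluded. Two further inaccuracies in your sketch: virtual arrows \emph{do} occur $g$-consecutively (they form whole $g$-orbits of size $1$ or $2$; the non-adjacency statement 2.1.1(2) is about $f$-cycles), and passing to $\La^{\op}$ --- a legitimate reduction of (iii) to (ii), used by the paper for Lemma \ref{lem:3.2} --- does not interchange $f$ and $g$: it reverses paths and inverts the permutations, which is what matches relations of type (2) with those of type (3).
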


\section{The exceptions in (2) and (3)}

Consider $\zeta:= \alpha f(\alpha) g(f(\alpha))$, we determine the exceptions occuring 
in Definition \ref{def:2.2}(2).  
We require that
elements of the form $A_{\beta}$ 
are non-zero in $\La$.

\medskip

\begin{lemma}\label{lem:3.1}  
The element $\zeta$ is a non-zero multiple 
of a path $A_{\alpha}$ or $A_{\ba}$ in 
two cases:\\ 
{\rm(a)} \  The arrow   $\ba$ is virtual, then   
	$\zeta = c_{\ba}c_{\alpha}A_{\alpha}$.\\
{\rm(b)} \  We have  $n_{\ba}=3=m_{\ba}n_{\ba}$ and $f(\ba)$ is virtual, then 
	$\zeta = c_{\ba}c_{f(\ba)} c_{\alpha}A_{\alpha}$.
\end{lemma}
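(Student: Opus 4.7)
The plan is to rewrite $\zeta = \alpha f(\alpha) g(f(\alpha))$ by repeatedly applying relation (1) of Definition \ref{def:2.2} together with the defining identity $g(\beta) = \overline{f(\beta)}$. The first move is the same in both cases: relation (1) replaces $\alpha f(\alpha)$ by $c_{\ba} A_{\ba}$, so $\zeta = c_{\ba} A_{\ba}\cdot g(f(\alpha))$, and the task reduces to identifying $A_{\ba}\cdot g(f(\alpha))$ as a scalar multiple of $A_\alpha$.

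For this identification I would first record the key ``shift'' identity
\[
g(f(\alpha)) \;=\; \overline{f^2(\alpha)} \;=\; g^{-1}(\ba),
\]
which follows from $g = \overline{f(\cdot)}$ together with the derived identity $g^{-1}(\gamma) = f^{-1}(\overline{\gamma}) = f^2(\overline{\gamma})$ (using $f^3=\id$). In case (a), virtuality of $\ba$ gives $m_{\ba}n_{\ba}=2$, so both $A_{\ba}=\ba$ and $n_{\ba}\in\{1,2\}$; the latter forces $g^{-1}(\ba)=g(\ba)$ and hence $g(f(\alpha))=\overline{g(\ba)}=f(\ba)$. Thus $\zeta = c_{\ba}\,\ba f(\ba)$, and relation (1) applied to $\ba$ (so that $\ba f(\ba) = c_\alpha A_\alpha$) finishes the case.

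In case (b) the idea is the same but with one additional rewrite. Now $A_{\ba} = \ba\, g(\ba)$ (length $2$) and $n_{\ba}=3$, so the shift identity reads $g(f(\alpha))=\overline{g^2(\ba)}=f(g(\ba))$; regrouping gives $\zeta = c_{\ba}\,\ba\bigl(g(\ba)\,f(g(\ba))\bigr)$. I would then apply relation (1) to the arrow $g(\ba)$, noting that $\overline{g(\ba)} = f(\ba)$ is virtual by hypothesis, so $A_{f(\ba)} = f(\ba)$ and the bracketed factor collapses to $c_{f(\ba)}\,f(\ba)$. A final application of relation (1) to $\ba$ yields $\zeta = c_{\ba}\,c_{f(\ba)}\,c_\alpha A_\alpha$.

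The main point to keep track of is the interplay of $f$ with the bar-involution: the rewriting only works because the $g$-cycles involved have the right length to make $g^{-1}$ coincide with a small non-negative power of $g$, which turns products along the $g$-cycle of $\ba$ into products $\beta f(\beta)$ to which relation (1) applies. Non-vanishing of the asserted scalar multiples is then automatic from $c_{\bullet}\in K^*$ and the standing requirement that each $A_\beta$ is non-zero in $\La$.
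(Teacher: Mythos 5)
Your proposal is correct and follows essentially the same route as the paper: apply relation (1) of Definition \ref{def:2.2} to $\alpha f(\alpha)$, identify $g(f(\alpha))$ with the appropriate arrow on the $g$-cycle of $\ba$, and apply relation (1) once more (twice in case (b)), exactly as in the paper's proof. One small bookkeeping slip: your displayed ``shift'' identity should be $f^2(\alpha)=g^{-1}(\ba)$, hence $g(f(\alpha))=\overline{f^2(\alpha)}=\overline{g^{-1}(\ba)}$ --- as written the bar is missing on the right-hand side --- but the versions you actually use in the two cases, $g(f(\alpha))=\overline{g(\ba)}=f(\ba)$ and $g(f(\alpha))=\overline{g^{2}(\ba)}=f(g(\ba))$, are the correct barred ones, so the argument is unaffected.
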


%\medskip
%
%{\it Proof}  
\begin{proof}
By Lemma \ref{lem:2.5} we may assume that $\alpha$ and $f(\alpha)$ are not loops. % ({\tt maybe not needed}). 
By  relation (1)  of Definition \ref{def:2.2} we have 
$$\zeta = c_{\ba}A_{\ba}g(f(\alpha)). \leqno{(*)}
$$

(a) Assume $\ba$ is virtual, 
 then $g(f(\alpha)) = f(\ba)$ and  we have 
 $\zeta = c_{\ba}\ba g(f(\alpha)) =  c_{\ba}\ba f(\ba) = c_{\ba}c_{\alpha}A_{\alpha}.
$

\smallskip

(b) 
Suppose  $n_{\ba}= m_{\ba}n_{\ba} = 3$,  then $g(f(\alpha)) = f(g(\ba))$ and 
(*) is equal to 
$$ c_{\ba}\ba g(\ba) f(g(\ba)) = c_{\ba} c_{f(\ba)} \ba A_{f(\ba)}.
\leqno{(**)}$$
If $f(\ba)$ is virtual then (**) is 
%a non-zero scalar multiple of $A_{\alpha}$.
equal to 
$c_{\ba} c_{f(\ba)} \ba f(\ba) = c_{\ba} c_{f(\ba)} c_{\alpha} A_{\alpha}$.

Suppose $|A_{f(\ba)}|=2$, then $(**)$ is equal to
$$c_{\ba}c_{f(\ba)}\ba f(\ba)g(f(\ba)) = \lambda A_{\alpha}g(f(\ba))
$$ for a non-zero scalar $\lambda$. 
By Observation~\ref{obs:2.3}(2),  
the arrow $\alpha$ is not virtual. 
%Furthermore if $n_{\alpha} = 3$ then by Lemma \ref{lem:2.3} the quiver $Q$ is tetrahedral, 
%which has been dealt with in \cite{WSA}.
The cases of $A_{f(\ba)}$ of higher length will be
dealt with in Lemma~\ref{lem:4.5}.
%$\Box$
\end{proof}

%\bigskip
\medskip

Consider 
$\xi := \alpha g(\alpha)f(g(\alpha))$, 
we determine the exceptions occuring in Definition \ref{def:2.2} (3). We include the proof although it 
is equivalent  to Lemma \ref{lem:3.1} for the opposite algebra of $\La$.

%\medskip

\begin{lemma}\label{lem:3.2} The element $\xi$ is a non-zero scalar multiple of a path $A_{\alpha}$ or
$A_{\ba}$ in the following cases:\\
{\rm(a)}  \ The arrow  $f(\alpha)$ is virtual, and then
	$\xi = c_{f(\alpha)}c_{\ba}A_{\ba}$.\\
{\rm(b)} \ We have $n_{f(\alpha)}m_{f(\alpha)} = 3 = n_{f(\alpha)}$ and $\ba$ is virtual, then $\xi = c_{f(\alpha)} c_{\ba} c_{\alpha} A_{\alpha}$.
\end{lemma}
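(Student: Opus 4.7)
The plan is to mirror the proof of Lemma~\ref{lem:3.1} verbatim, since (as the authors remark) Lemma~\ref{lem:3.2} is simply its statement for $\La^{\op}$. First, I invoke Lemma~\ref{lem:2.5} to reduce to the case where neither $\alpha$ nor $g(\alpha)$ is a loop. Applying relation~(1) of Definition~\ref{def:2.2} to the subpath $g(\alpha)\,f(g(\alpha))$ with the identity $\overline{g(\alpha)} = f(\alpha)$, I obtain
$$ g(\alpha)\,f(g(\alpha)) \,=\, c_{f(\alpha)}\,A_{f(\alpha)}, \qquad \text{hence} \qquad \xi \,=\, c_{f(\alpha)}\,\alpha\,A_{f(\alpha)}. \qquad (\ast) $$

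For case~(a), the virtuality of $f(\alpha)$ gives $|A_{f(\alpha)}| = 1$, so $A_{f(\alpha)} = f(\alpha)$, and a second application of relation~(1) to $\alpha f(\alpha)$ collapses $(\ast)$ to $c_{f(\alpha)}c_{\ba}A_{\ba}$, as claimed. For case~(b), the hypothesis $m_{f(\alpha)}n_{f(\alpha)} = 3$ gives $|A_{f(\alpha)}| = 2$ and $A_{f(\alpha)} = f(\alpha)\,g(f(\alpha))$; combining $(\ast)$ with relation~(1) on $\alpha f(\alpha)$ and using that $\ba$ virtual implies $A_{\ba} = \ba$ reduces $(\ast)$ to $\xi = c_{f(\alpha)} c_{\ba}\,\ba\,g(f(\alpha))$. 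A final application of relation~(1) to the arrow $\ba$, namely $\ba f(\ba) = c_{\alpha}A_{\alpha}$, then yields the desired formula provided one can verify the combinatorial identity
$$ g(f(\alpha)) \,=\, f(\ba). $$

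This identity is the only non-routine step and is where the hypotheses on $\ba$ enter. Applying $\overline{\,\cdot\,}$ to both sides and using both $g(\beta) = \overline{f(\beta)}$ and the triangulation axiom $f^{3} = \id$, the identity is equivalent successively to $f^{2}(\alpha) = g(\ba)$, to $\alpha = f(g(\ba))$, and finally to $g^{2}(\ba) = \ba$. But $\ba$ virtual means $m_{\ba}n_{\ba} = 2$ and hence $n_{\ba} \in \{1,2\}$; in both subcases---$\ba$ a $g$-fixed loop ($n_{\ba}=1$) or $\ba$ sitting in a $2$-cycle of $g$ ($n_{\ba}=2$)---$g^{2}$ fixes $\ba$, so the identity holds and case~(b) follows. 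The whole argument parallels Lemma~\ref{lem:3.1} step for step, which is why the authors chose to omit the duplicate details.
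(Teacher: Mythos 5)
Your proposal is correct and follows essentially the same route as the paper's proof: reduce to non-loops via Lemma~\ref{lem:2.5}, use relation (1) of Definition~\ref{def:2.2} to rewrite $\xi = c_{f(\alpha)}\alpha A_{f(\alpha)}$, and then collapse cases (a) and (b) by further applications of relation (1). The only addition is that you explicitly verify the identity $g(f(\alpha)) = f(\ba)$ for $\ba$ virtual (reducing it to $g^2(\ba)=\ba$), which the paper uses without proof; that verification is correct.
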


%\medskip

%{\it Proof} \  
\begin{proof}
We may assume that $\alpha, g(\alpha)$ are not loops, %{\tt maybe not needed}
by Lemma~\ref{lem:2.5}.
By relation (1) of Definition \ref{def:2.2} we have
$$\xi = c_{f(\alpha)}\alpha  A_{f(\alpha)}.
\leqno{(*)}
$$

(a) Assume $f(\alpha)$ is virtual, then 
$\xi = c_{f(\alpha}\alpha f(\alpha) = c_{f(\alpha)}c_{\ba}A_{\ba}$.

(b) Assume now that $n_{f(\alpha)}=n_{f(\alpha)}m_{f(\alpha)} = 3$, 
then (*) is equal to
 $c_{f(\alpha)}\alpha f(\alpha)g(f(\alpha))$, which is 
$$c_{f(\alpha)}c_{\ba}A_{\ba}g(f(\alpha)).
\leqno{(**)}
$$
 If $\ba$ is virtual then $g(f(\alpha)) = f(\ba)$ and  (**)
is 
%a non-zero scalar multiple of $A_{\alpha}$.
equal to $c_{f(\alpha)}c_{\ba}c_{\alpha}A_{\alpha}$.
Suppose $|A_{\ba}|=2$, then (**) is 
$\equiv \ba g(\ba) f(g(\ba)) \equiv \ba A_{f(\ba)}$. 
We have here $|A_{\ba}|=2=|A_{f(\alpha)}|$ and therefore by Observation~\ref{obs:2.3}(3), we know 
$f(\alpha)$ is not virtual.
The cases of $A_{\ba}$ of higher length will be
dealt with in Lemma~\ref{lem:4.5}.
%$\Box$
\end{proof}

%\bigskip
\medskip

Recall that in general $B_{\alpha}\equiv B_{\ba}$ for $\alpha, \ba$ starting at $i$. Namely
$$
 B_{\alpha} = \alpha A_{g(\alpha)}
 \equiv \alpha f(\alpha)f^2(\alpha)
 \equiv A_{\ba} f^2(\alpha)  = B_{\ba}.
$$

\subsection{The socle and the second socle near exceptional $\zeta, \xi$}

Let $A_{\alpha}$, or $A_{\ba}$ be one of the elements occuring in the exceptions described in Lemmas \ref{lem:3.1}
and Lemma \ref{lem:3.2}. 
We will now prove that this belongs to  the second socle. 
This will follow quickly from the following lemma, 
which also will be  useful later.

\bigskip

In the following we assume $\ba$ is virtual;  one may consult one of the following two diagrams, depending on whether or not
$\ba$ is a loop.  
In the first diagram, $g$ has cycles $(f^2(\alpha) \ \ba)$ and $(\beta \ f^2(\ba) \ \alpha \ldots)$,
where $\beta$ is the last arrow in $A_{\alpha}$. 
In the second diagram, $g$ has cycles $(\ba) (f(\alpha) \ \alpha \ g(\alpha) \ldots)$. 
$$
  \xymatrix@R=3.pc@C=1.8pc{
    & c
    \ar[rd]^{}
    \\
    i
    \ar[ru]^{\alpha}
    \ar@<-.5ex>[rr]_{\ba}
    && j
    \ar@<-.5ex>[ll]_{}
 \ar[ld]^{}
    \\
    & d
   \ar[lu]^{}
}
$$
\[
  \xymatrix{
%  \xymatrix@C=1pc{
 i
    \ar@<.5ex>[r]^{\alpha}
    \ar@(dl,ul)^{\bar{\alpha}}[]
& j
    \ar@<.5ex>[l]^{f(\alpha)}
  }
 .
\]

\begin{lemma} \label{lem:3.3} Assume $\ba$ is a virtual arrow. If $\ba$ is not a loop then 
there are six  relations of type $\zeta$ or $\xi$  in which 
$\ba$ occurs. 
If $\ba$ is a loop then there are four relations of type $\zeta$ or $\xi$ in which $\ba$ occurs.
In both cases, each of these is zero in $\La$. 
\end{lemma}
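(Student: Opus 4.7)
The plan is to enumerate the six (resp.\ four) relations explicitly and then verify each vanishes in $\La$ by combining socle factoring with the defining relations.

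\medskip

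\noindent\textbf{Enumeration.} An arrow $\ba$ can occur in $\zeta_\gamma=\gamma f(\gamma) g(f(\gamma))$ in exactly one of three slots, which—using $f^3=\mathrm{id}$ and the $g$-cycle $(\ba,f^2(\alpha))$ in the non-loop case—determines $\gamma$ uniquely as one of $\ba,\,f^2(\ba),\,f(\alpha)$. The symmetric analysis for $\xi_\gamma=\gamma g(\gamma) f(g(\gamma))$ yields $\gamma\in\{\ba,\,f^2(\alpha),\,\beta\}$, for a total of six. In the loop case, $g(\ba)=\ba$ collapses two of the equations into one, leaving four.

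\medskip

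\noindent\textbf{Socle factoring.} Since $\ba$ is virtual, the length-two paths $\ba f^2(\alpha)$ and $f^2(\alpha)\ba$ are precisely the socle generators $B_\ba$ and $B_{f^2(\alpha)}$ of $e_i\La$ and $e_j\La$. Four of the six relations display such a substring and factor as an arrow multiplying one of these socle elements from one side:
\[
\zeta_{f^2(\ba)}=f^2(\ba)\cdot B_\ba,\quad \zeta_{f(\alpha)}=f(\alpha)\cdot B_{f^2(\alpha)},
\]
\[
\xi_{\ba}=B_\ba\cdot\alpha,\quad \xi_{f^2(\alpha)}=B_{f^2(\alpha)}\cdot f(\ba).
\]
Because weighted surface algebras are symmetric, the left and right socles of every indecomposable projective coincide, so an arrow acting on $B_\ba$ or $B_{f^2(\alpha)}$ from either side annihilates it. Each of these four relations is therefore zero in $\La$.

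\medskip

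\noindent\textbf{The two remaining relations.} For $\zeta_{\ba}=\ba f(\ba)g(f(\ba))$ and $\xi_\beta=\beta f^2(\ba)\ba$, there is no immediate socle substring, and I would argue directly that each is a generator of $I$ via Definition~\ref{def:2.2}. Using properties (1)--(3) of virtual arrows, the arrows $f^2(\ba),\,f(\alpha),\,f(\ba)$ are all non-virtual, so neither exception in Def.~\ref{def:2.2}(2) applies to $\zeta_\ba$, placing it in $I$. For $\xi_\beta$ the same analysis, together with Observation~\ref{obs:2.3} and the fact that a $g$-cycle of length three contains no loop when $|Q_0|\geq 3$, rules out the conditions for $\xi_\beta$ to be an exception in Def.~\ref{def:2.2}(3); any residual borderline configuration can be resolved by applying relation~(1) to $f^2(\ba)$, rewriting $f^2(\ba)\ba = c_{f(\beta)} A_{f(\beta)}$, and invoking the one-dimensionality of the socle at the relevant vertex together with symmetry. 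The loop case is handled analogously; there the restriction on 3-cycles additionally forces $n_\alpha\geq 4$, which simplifies several sub-cases.

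\medskip

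\noindent\textbf{Main obstacle.} The delicate part is the last step: one must verify that the two remaining relations really do lie in $I$ (equivalently, that no exception in Def.~\ref{def:2.2} can arise for them under the hypothesis that $\ba$ is virtual). This requires carefully combining properties (1)--(3) of virtual arrows with Observation~\ref{obs:2.3} to exclude combinatorially forbidden configurations; the first two steps are essentially bookkeeping.
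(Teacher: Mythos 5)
Your enumeration of the six relations (respectively four in the loop case) is correct and agrees with the paper, but the way you verify that they vanish has a genuine circularity. For $\zeta_{f^2(\ba)}$, $\zeta_{f(\alpha)}$, $\xi_{\ba}$, $\xi_{f^2(\alpha)}$ you argue that each is an arrow multiplying $B_{\ba}$ or $B_{f^2(\ba)}$ (note: the second socle word here is $B_{f^2(\ba)}$'s analogue $f^2(\alpha)\ba$, i.e.\ $B_{f^2(\alpha)}$) and that such a product dies because $B_{\ba}$, $B_{f^2(\alpha)}$ generate socles and $\La$ is symmetric. At this point in the development neither fact is available: that $B_{\gamma}$ is annihilated by the radical on both sides is exactly what Lemmas \ref{lem:4.1}, \ref{lem:4.2} and \ref{lem:4.5}(i) prove, and their proofs proceed ``by applying Lemma \ref{lem:3.3} repeatedly''; likewise the symmetry and the correct dimension of $\La$ are consequences that the corrected relations are designed to secure, not inputs one may assume while verifying those relations. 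So four of your six cases, and also your fallback for $\xi_{\beta}$ (``one-dimensionality of the socle together with symmetry''), assume what is to be proved.

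The paper's proof avoids this by showing that each of the six paths is literally one of the generators listed in Definition \ref{def:2.2}(2),(3), i.e.\ that neither exception applies to it. The only delicate point is ruling out the second exception (the configuration $n_{\bar\gamma}=m_{\bar\gamma}n_{\bar\gamma}=3$ with an adjacent virtual arrow) for $f^2(\ba)\,\ba\,f^2(\alpha)$, $f(\alpha)f^2(\alpha)\ba$ and $\beta f^2(\ba)\ba$; this requires writing down the putative length-three $g$-cycles, e.g.\ $(f(\ba)\ g(f(\ba))\ f(\alpha))$ and $(\alpha\ g(\alpha)\ f^2(\ba))$, and identifying arrows such as $f^2(\beta)=g(\alpha)$ so that the virtual-arrow properties (1)--(3) yield a contradiction. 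You do not carry out these computations: for two of the three you replace them by the circular socle argument, and for $\xi_{\beta}$ you only gesture at Observation \ref{obs:2.3} and ``residual borderline configurations''. To repair the proof, drop all appeals to symmetry and to socle membership, and instead check case by case, using only the combinatorics of $f$, $g$ and the virtual-arrow properties, that none of the six paths is an exception in Definition \ref{def:2.2}.
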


%\medskip
%
%{\it Proof } 
\begin{proof}
We write down details for the case $\ba$ is not a loop, the other case is easier.\\
(1) We start with the three elements of type $\zeta$.

(a) We have $\ba f(\ba) g(f(\ba))=0$ since 
$\alpha$ is not virtual and $f(\alpha)$ is not virtual.

(b) We have $f^2(\ba)\ba f^2(\alpha)=0$. Namely $\overline{f^2(\ba))} = g(f(\ba))$ and this is not virtual since
$f(\ba)$ is not virtual. Suppose $n_{g(f(\ba))} = n_{g(f(\ba))}m_{g(f(\ba))}=3$, then $g$ has cycle
$$(f(\ba) \ g(f(\ba)) \ f(\alpha)).\leqno{(*)}$$
Let $\gamma= f(g(f(\ba))$, we want that $\gamma$ is not virtual. Suppose for a contradiction it is virtual
then also $f^2(\bar{\gamma})$ is virtual. Now $\bar{\gamma}= g^2(f(\ba))= f(\alpha)$
by (*), and $f^2(\bar{\gamma})=\alpha$, which is not virtual, a contradiction.

(c) We have $f(\alpha)f^2(\alpha)\ba = 0$ since $g(\alpha)=\overline{f(\alpha)}$ is not virtual, and if 
$n_{g(\alpha)}=n_{g(\alpha)}m_{g(\alpha)} =3$ then $g$ has cycle 
$$(\alpha \ g(\alpha) \ f^2(\ba)). \leqno{(**)}
$$
If $f(g(\alpha))$ would be virtual then also 
$f^2(\overline{f(g(\alpha))})$ would be virtual.
But using (**) we have
$$f^2(\overline{f(g(\alpha))}) = f^2(g^2(\alpha)) = f^4(\ba) = f(\ba) 
$$
which is not virtual.
\medskip

\noindent
(2) We consider the three elements of type $\xi$. 

(a) We have $f^2(\alpha)\ba f(\ba)=0$ since $\alpha = f(f^2(\alpha))$ is not virtual, and also
$f(\alpha)$ is not virtual. 

(b) We have $\ba f^2(\alpha)\alpha=0$ since $f(\ba)$ is not virtual and $\alpha$ is not virtual.

(c) Let $\beta$ be such that $g(\beta)= f^2(\ba)$, we claim that
$\beta f^2(\ba)\ba = 0$. We have  $f(\beta) = g(f(\ba))$. If this were virtual then also
$f^2(\overline{f(\beta)}) = f^2(g(\beta)) = f^4(\ba)= f(\ba)$ would be virtual which is not the case.
Suppose $n_{f(\beta)} = n_{f(\beta)}m_{f(\beta)} =3$, then $g$ has a cycle 
$$(f(\alpha) \ f(\ba) \ f(\beta)).
$$
Let $s=s(\beta)$, then $f^2(\beta): c\to s$. Therefore $f^2(\beta) = g(\alpha)$
and is not virtual. 
%It follows that $\bar{\beta}$ also is not virtual.
%$\Box$
\end{proof}

%\bigskip
\medskip

\section{Revising \cite{WSA-GV}, Section 4}

Write $\rad \La = J$.

\begin{lemma} \label{lem:4.1}   Let $\alpha$ be an arrow such that $A_{\alpha}\equiv \zeta = \alpha f(\alpha)g(f(\alpha))$ where $\ba$ is virtual, as 
in Lemma \ref{lem:3.1}. Then\\
{\rm(1)} \ $A_{\alpha}J = \langle B_{\alpha}\rangle$ and $JA_{\alpha} = \langle B_{f^2(\ba)}\rangle$.\\
{\rm(2)} \  $B_{\alpha}J=0 = JB_{\alpha}$ and $B_{f^2(\ba)}J=0=JB_{f^2(\ba)}$. 
\end{lemma}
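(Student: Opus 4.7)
The plan is to dispose of part (2) first using the socle property of weighted surface algebras, then to reduce part (1) to checking the result of multiplying $A_\alpha$ by single arrows on either side, with the ``wrong'' products killed by Lemma~\ref{lem:3.3}. For part (2), $B_\alpha$ spans the one-dimensional socle of $e_{s(\alpha)}\La$ by the socle requirement in Section~\ref{sec:context}, and since a socle is annihilated by $J$ on both sides we get $B_\alpha J = J B_\alpha = 0$. Because $g(f^2(\ba)) = \alpha$ places $f^2(\ba)$ in the same $g$-cycle as $\alpha$, and because $m$ and $n$ are constant on $g$-cycles, $B_{f^2(\ba)}$ similarly spans $\soc(e_{s(f^2(\ba))}\La)$, and the same reasoning yields $B_{f^2(\ba)}J = JB_{f^2(\ba)} = 0$.

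For part (1), note that $|A_\alpha| = 3$ forces $m_\alpha n_\alpha = 4$, so $B_\alpha = A_\alpha \cdot g^3(\alpha)$ and $B_{f^2(\ba)} = f^2(\ba)\cdot A_\alpha$; this gives $\langle B_\alpha\rangle \subseteq A_\alpha J$ and $\langle B_{f^2(\ba)}\rangle \subseteq JA_\alpha$. For the reverse inclusion $A_\alpha J \subseteq \langle B_\alpha\rangle$, any element of $A_\alpha J$ is a combination of products $A_\alpha \gamma p$ where $\gamma$ is an arrow at $t(A_\alpha)$ and $p$ a (possibly empty) path. By 2-regularity there are exactly two choices for $\gamma$; one, $g^3(\alpha)$, produces $B_\alpha$. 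The other is the opposite arrow, which in the equivalent presentation $A_\alpha \equiv \alpha f(\alpha) f(\ba)$ provided by Lemma~\ref{lem:3.1}(a) (using $g(f(\alpha)) = f(\ba)$) reads as $g(f(\ba))$. Applying $\alpha f(\alpha) = c_{\ba}\ba$ from Definition~\ref{def:2.2}(1),
$$A_\alpha\cdot g(f(\ba)) \;\equiv\; c_{\ba}\,\ba f(\ba) g(f(\ba)),$$
which is the $\zeta$-relation at $\ba$ and hence zero in $\La$ by Lemma~\ref{lem:3.3}. For any tail $p$ of positive length, $A_\alpha\gamma p$ lies in $B_\alpha J$ or is already zero, and $B_\alpha J = 0$ by part (2); hence $A_\alpha J = \langle B_\alpha\rangle$.

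The argument for $JA_\alpha \subseteq \langle B_{f^2(\ba)}\rangle$ is strictly parallel. The two arrows ending at $s(\alpha)$ are $f^2(\ba)$ (giving $B_{f^2(\ba)}$) and $f^2(\alpha)$; the same presentation $A_\alpha \equiv \alpha f(\alpha) f(\ba)$ combined with $\alpha f(\alpha) = c_{\ba}\ba$ turns the remaining product into
$$f^2(\alpha)\cdot A_\alpha \;\equiv\; c_{\ba}\, f^2(\alpha) \ba f(\ba),$$
which is the $\xi$-relation at $f^2(\alpha)$ (since $g(f^2(\alpha)) = \ba$ and $f(g(f^2(\alpha))) = f(\ba)$), hence zero by Lemma~\ref{lem:3.3}; longer prefixes factor through $B_{f^2(\ba)}$ and vanish by part (2). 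The main subtlety is the bookkeeping step of identifying the ``opposite'' arrow at each endpoint of $A_\alpha$ and switching between the two equal-in-$\La$ presentations (the canonical $\alpha g(\alpha) g^2(\alpha)$ and the equivalent $\alpha f(\alpha) f(\ba)$) so that the residual product is manifestly one of the relations treated by Lemma~\ref{lem:3.3}; this must be checked in both the loop and non-loop configurations for $\ba$, but in both cases the same final appeal to Lemma~\ref{lem:3.3} suffices.
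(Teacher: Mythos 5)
There is a genuine gap, and it sits exactly at the point where the whole corrigendum needs care: your proof of part (2) is circular. You justify $B_{\alpha}J=0=JB_{\alpha}$ (and the same for $B_{f^2(\ba)}$) by citing ``the socle requirement'' of Section 2, i.e.\ that $B_{\alpha}$ spans the socle of $e_i\La$. But that sentence in the paper is a design requirement motivating the choice of relations, not an established property of $\La=KQ/I$; the fact that $B_{\alpha}\rad\La=0$ and $B_\alpha\neq 0$ is Lemma~\ref{lem:4.5}(i),(ii), whose proof explicitly quotes Lemma~\ref{lem:4.1} (and Lemmas \ref{lem:4.2}--\ref{lem:4.4}) for precisely the virtual-arrow cases at hand. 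So the statement you invoke is downstream of the lemma you are proving. (A secondary issue: even granting that $B_\alpha$ lies in the right socle, the conclusion $JB_\alpha=0$ uses that the left and right socles agree, i.e.\ self-injectivity/symmetry of $\La$, which is also not available at this stage; the right socle is a two-sided ideal, but it need not be killed by $J$ on the left for a general algebra.) What is actually needed for part (2) is a direct computation: using $B_{\alpha}\equiv B_{\ba}$ and $\ba$ virtual, one rewrites $B_{\alpha}\alpha\equiv \ba f^2(\alpha)\alpha$, $B_{\alpha}\ba\equiv \alpha[f(\alpha)f^2(\alpha)\ba]$, $f^2(\alpha)B_{\alpha}\equiv[f^2(\alpha)\ba f(\ba)]f^2(\ba)$, $f^2(\ba)B_{\alpha}\equiv\cdots$, and similarly for $B_{f^2(\ba)}$, and each bracketed factor is one of the $\zeta$- or $\xi$-type products through the virtual arrow $\ba$ shown to vanish in Lemma~\ref{lem:3.3}. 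These computations are the actual content of the paper's proof of part (2) and are missing from yours.

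Your part (1) is essentially the paper's argument: at each end of $A_{\alpha}$ there are two arrows, one producing $B_{\alpha}$ (resp.\ $B_{f^2(\ba)}$), and the other giving, after rewriting $A_{\alpha}\equiv \ba f(\ba)$ via relation (1), the products $\ba f(\ba)g(f(\ba))$ and $f^2(\alpha)\ba f(\ba)$, which vanish by Lemma~\ref{lem:3.3}; this bookkeeping is correct. But your disposal of the ``longer tails/prefixes'' again leans on part (2), so the gap propagates: as written, the proof does not close without replacing the socle appeal by the explicit Lemma~\ref{lem:3.3} computations.
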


%\medskip
%
%{\it Proof } 
\begin{proof}
We prove these by applying 
Lemma \ref{lem:3.3} repeatedly.

(1) Clearly $A_{\alpha}f^2(\ba) = B_{\alpha}$ and $f^2(\ba)A_{\alpha} = B_{f^2(\ba)}$. 

Next, we have $A_{\alpha}g(f(\ba))  \equiv \ba f(\ba) g(f(\ba)) = 0$ and $f^2(\alpha)A_{\alpha} \equiv f^2(\alpha)\ba f(\ba) = 0$
since they are paths of type $\zeta$, or $\xi$ which involve a virtual arrow. 

(2)  We have $B_{\alpha}\alpha \equiv B_{\ba}\alpha \equiv \ba f^2(\alpha)\alpha = 0$ and similarly
$f^2(\ba) B_{\alpha}=0$. 

Moreover $B_{\alpha}\ba \equiv \alpha[f(\alpha)f^2(\alpha)\ba] =0$ and 
$f^2(\alpha)B_{\alpha} \equiv [f^2(\alpha)\ba f(\ba)]f^2(\ba) =0$. 

(3)   First $B_{f^2(\ba)}f^2(\ba) \equiv f^2(\ba)B_{\alpha} =0$ by (2).
Moreover $B_{f^2(\ba)}g(f(\ba)) = f^2(\ba)A_{\alpha} g(f(\ba)) = 0$ by (1). 
Next, we have using $f^2(\alpha)$ also is virtual, 
$$f(\ba)B_{f^2(\ba)} \equiv f(\ba)f^2(\ba) \ba f(\ba) \equiv f^2(\alpha)\ba f(\ba) = 0.
$$
Finally,
if $\beta = g^{-1}(f^2(\ba))$, then
$\beta B_{f^2(\ba)} \equiv  [\beta f^2(\ba) \ba] f(\ba) = 0$. 
%$\Box$
\end{proof}

%\medskip

\begin{lemma} \label{lem:4.2}   Let $\alpha$ be an arrow such that $A_{\alpha}\equiv \zeta = \alpha f(\alpha)g(f(\alpha))$,
where  $n_{\ba} = n_{\ba}m_{\ba} = 3$ and $f(\ba)$ is virtual, as 
in Lemma \ref{lem:3.1}. Then\\
{\rm(1)} \ $A_{\alpha}J = \langle B_{\alpha}\rangle$ and $JA_{\alpha} = \langle B_{f^2(\ba)}\rangle$.\\
{\rm(2)} \  $B_{\alpha}J=0 = JB_{\alpha}$ and $B_{f^2(\ba)}J=0=JB_{f^2(\ba)}$. 
\end{lemma}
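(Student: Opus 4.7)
The plan is to mirror the proof of Lemma~\ref{lem:4.1} almost step by step, but with the roles adjusted: the virtual arrow relevant to this exceptional configuration is now $f(\ba)$ rather than $\ba$, and the $g$-cycle $(\ba\ g(\ba)\ g^2(\ba))$ has length three. By Lemma~\ref{lem:3.1}(b) we have $A_\alpha \equiv \alpha f(\alpha) g(f(\alpha))$, so $A_\alpha$ has length $3$ and $m_\alpha n_\alpha = 4$. The local picture is precisely the subquiver $Q'$ introduced before Observation~\ref{obs:2.3}, with the two arrows $f(\ba)$ and $g(f(\ba))$ forming the virtual $g$-cycle of length two, and all the parity restrictions in Observation~\ref{obs:2.3} will apply.

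For part (1), I would first verify directly from the definitions of $A_\alpha$ and $B_\alpha$ that $A_\alpha \cdot g^{m_\alpha n_\alpha-1}(\alpha) = B_\alpha$ and $f^2(\ba)\cdot A_\alpha = B_{f^2(\ba)}$, which handles the generators of the ideal. To show that every other right-multiplication by an arrow gives zero, I would compute $A_\alpha \cdot \gamma$ for $\gamma$ the other arrow starting at $t(A_\alpha)$; after rewriting $A_\alpha$ via relation~(1) of Definition~\ref{def:2.2}, this becomes a path containing the virtual arrow $f(\ba)$ in the position of a $\zeta$- or $\xi$-relation, so it vanishes by Lemma~\ref{lem:3.3} applied to $f(\ba)$. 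The symmetric computation for left multiplication by the other arrow ending at $s(A_\alpha)$ is handled by passing to $\La^{\op}$, where the exceptional configuration becomes exactly the opposite case already covered.

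For part (2), once (1) is in hand one knows $B_\alpha$ and $B_{f^2(\ba)}$ are candidates for socle basis elements; the task is then to verify that each of the four products $B_\alpha \cdot \delta$, $\delta \cdot B_\alpha$ and analogously for $B_{f^2(\ba)}$ is zero for every arrow $\delta$ adjacent to the appropriate endpoint. Each such product can be grouped, via associativity, so that an occurrence of a subpath of type $\zeta$ or $\xi$ containing the virtual arrow $f(\ba)$ becomes visible, and Lemma~\ref{lem:3.3} (together with the identities $B_\alpha \equiv B_{\ba}$ exhibited just before the subsection on the socle) kills the expression. The main obstacle will be bookkeeping rather than any new idea: one has to identify in the $Q'$-neighborhood which arrow plays which role, verify that each side of the reduction lands on a legitimate $\zeta$- or $\xi$-relation, and invoke Observation~\ref{obs:2.3}(3) to rule out the competing exceptional case $n_{f(\alpha)}=2$, $n_{\ba}=n_{f(\alpha)}=3$. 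Once this combinatorial unpacking is carried out, the four-or-six vanishings follow by the same pattern as in Lemma~\ref{lem:4.1}.
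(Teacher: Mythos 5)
Your top-level strategy is the paper's: rewrite each product, via relation (1) of Definition \ref{def:2.2} and the identity $B_{\alpha}\equiv B_{\ba}$, until a $\zeta$- or $\xi$-word through the virtual arrows $f(\ba)$, $g(f(\ba))$ becomes visible, and then quote Lemma \ref{lem:3.3}. However, your setup contains a genuine error. From $A_{\alpha}\equiv\zeta$ you conclude that $A_{\alpha}$ has length $3$ and $m_{\alpha}n_{\alpha}=4$; but $\equiv$ is an equality in $\La$ up to a nonzero scalar, not an equality of paths, so it says nothing about $|A_{\alpha}|$. In fact, $f(\ba)$ cannot be a virtual loop here (condition (3) of the Assumption preceding Definition \ref{def:2.2}, applied to $g(\ba)$, would force $m_{\ba}n_{\ba}\geq 4$, contradicting $m_{\ba}n_{\ba}=3$), so the local picture is indeed $Q'$, but under the correct identification $\alpha$ lies on the \emph{long} $g$-cycle, and Observation \ref{obs:2.3}(1) gives $n_{\alpha}\geq 5$. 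This is not a peripheral slip: your proof is explicitly organised as bookkeeping of ``which arrow plays which role'' in $Q'$, and the misidentification shows that bookkeeping has not been done. The workable mechanism, as in the paper, is to replace the long paths by short representatives, $A_{\alpha}\equiv\ba f(\ba)$ and $B_{\alpha}\equiv\ba f(\ba)f^2(\ba)$, before locating the Lemma \ref{lem:3.3} words. (Your appeal to Observation \ref{obs:2.3}(3) ``to rule out $n_{f(\alpha)}=2$, $n_{\ba}=n_{f(\alpha)}=3$'' is garbled and is not what is needed in this lemma.)

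Two further points. First, disposing of the left-hand products ``by passing to $\La^{\op}$, where the exceptional configuration becomes exactly the opposite case already covered'' is circular at this stage: the opposite of the present configuration is the $\xi$-exception of Lemma \ref{lem:4.4}, which the paper obtains \emph{from} Lemma \ref{lem:4.2} by that very duality, so nothing on the opposite side is available yet. No duality is needed: the left products are themselves $\xi$-type words in the list of Lemma \ref{lem:3.3}, e.g.\ $f^2(\alpha)A_{\alpha}\equiv f^2(\alpha)\ba f(\ba)=0$. Second, the products with $B_{f^2(\ba)}$ are more than routine bookkeeping: for instance $f(\ba)B_{f^2(\ba)}$ and $g^{-1}(f^2(\ba))B_{f^2(\ba)}$ require rewriting through $f(\ba)f^2(\ba)\ba\equiv B_{f(\ba)}\equiv B_{g(\ba)}$ and then recognising a Lemma \ref{lem:3.3} word for the \emph{other} virtual arrow $g(f(\ba))$; your sketch neither carries out nor indicates these reductions, and the incorrect value of $m_{\alpha}n_{\alpha}$ gives no confidence they would come out right.
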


%{\it Proof} 
\begin{proof}
As above, we prove these by applying  Lemma \ref{lem:3.3} repeatedly.

(1) Clearly $A_{\alpha}f^2(\ba) = B_{\alpha}$ and $f^2(\ba)A_{\alpha} = B_{f^2(\ba)}$. 
We have $A_{\alpha}g(f(\ba)) \equiv \ba f(\ba) g(f(\ba)) = 0$ and $f^2(\alpha)A_{\alpha} \equiv f^2(\alpha)\ba f(\ba) = 0$. 

(2) \ We have 
\begin{align*} B_{\alpha}\alpha & \equiv \ba [f(\ba)f^2(\ba)\alpha] = 0,\cr
f^2(\alpha) B_{\alpha} &\equiv [f^2(\alpha)\ba f(\ba)]f^2(\ba) = 0,\cr
B_{\alpha}\ba &\equiv \ba f(\ba)[f^2(\ba)\ba] \equiv \ba f(\ba)g(f(\ba))=0,\cr
f^2(\ba)B_{\alpha} &\equiv [f^2(\ba)\ba] f(\ba)f^2(\ba) \equiv g(f(\ba))f(\ba)f^2(\ba) = 0.
\end{align*} 

(3) \ We have using also some identities from (2) and (1) 
$$B_{f^2(\ba)}f^2(\ba)  = f^2(\ba)B_{\alpha} = 0 \  \mbox{ and } \ 
B_{f^2(\ba)}g(f(\ba)) = f^2(\ba)[A_{\alpha}g(f(\ba))] = 0,$$
\begin{align*} f(\ba)B_{f^2(\ba)} &= [f(\ba)f^2(\ba)\ba]f(\ba) \equiv B_{f(\ba)}f(\ba)\cr
&  \equiv B_{g(\ba)}f(\ba) \equiv g(\ba)[g^2(\ba)\ba f(\ba)] = 0,\cr
\beta B_{f^2(\ba)} & \equiv f(g(\ba)) B_{g(f(\ba))} \equiv f(g(\ba)) g(f(\ba))f(\ba) 
= f(g(\ba)) f^2(g(\ba)) g\big(f^2(g(\ba))\big) = 0.
\end{align*} 
%$\Box$
\end{proof}

The analogues of Lemma \ref{lem:4.1} and Lemma \ref{lem:4.2} for $\xi$ as in Lemma \ref{lem:3.2} hold, they are just the same as  
Lemma \ref{lem:4.1} and Lemma \ref{lem:4.2} for the opposite algebra of $\La$. We give the statements.

\begin{lemma}\label{lem:4.3}  Let $\alpha$ be an arrow such that $A_{\ba}\equiv \xi = \alpha g(\alpha)f(g(\alpha))$,
where  $f(\alpha)$ is virtual, as 
in Lemma \ref{lem:3.2}. Then\\
{\rm(1)} \ $A_{\ba}J = \langle B_{\ba}\rangle$ and $JA_{\ba} = \langle B_{f^2(\alpha)}\rangle$.\\
{\rm(2)} \  $B_{\ba}J=0 = JB_{\ba}$ and $B_{f^2(\alpha)}J=0=JB_{f^2(\alpha)}$. 
\end{lemma}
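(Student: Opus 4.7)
The plan is to deduce Lemma \ref{lem:4.3} from Lemma \ref{lem:4.1} by passing to the opposite algebra. As noted immediately after Definition \ref{def:2.2}, $\La^{\op}$ is again a weighted surface algebra; writing $\alpha^{\ast}$ for the opposite of an arrow $\alpha$ of $Q$, the triangulation-quiver data $(Q,f,m_{\bullet},c_{\bullet})$ gets replaced by data $(Q^{\op},\tilde f,m_{\bullet},c_{\bullet})$, under which the permutation $\tilde g$ for $\La^{\op}$ corresponds to $f$ on $Q$ (up to reversal of arrows), and the involution $\overline{\,\cdot\,}$ is preserved. Consequently a $\xi$-type relation of $\La$ corresponds to a $\zeta$-type relation of $\La^{\op}$ (relations (2) and (3) of Definition \ref{def:2.2} get swapped), and the hypothesis ``$f(\alpha)$ is virtual'' in $\La$ translates to ``$\overline{\alpha^{\ast}}$ is virtual'' in $\La^{\op}$, which is exactly the hypothesis of Lemma \ref{lem:4.1}.

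Next I would match the relevant paths on the two sides. Under reversal, $A_{\ba}$ becomes $A_{\alpha^{\ast}}$ in $\La^{\op}$, while $B_{\ba}$ and $B_{f^{2}(\alpha)}$ become $B_{\alpha^{\ast}}$ and $B_{(\tilde f)^{2}(\overline{\alpha^{\ast}})}$ respectively. Since the anti-isomorphism $\La\to\La^{\op}$ exchanges left and right multiplication, the assertion $A_{\ba}J=\langle B_{\ba}\rangle$ in $\La$ is precisely $J\,A_{\alpha^{\ast}}=\langle B_{\alpha^{\ast}}\rangle$ in $\La^{\op}$, and similarly for the other three identities in parts (1) and (2). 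Under these translations, the statements of Lemma \ref{lem:4.3} become exactly the statements of Lemma \ref{lem:4.1} applied inside $\La^{\op}$, so they hold.

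As an alternative, one can imitate the proof of Lemma \ref{lem:4.1} directly, repeatedly invoking Lemma \ref{lem:3.3}: one first checks that $A_{\ba}\,g(\alpha)\equiv B_{\ba}$ and $g(\alpha)\,A_{\ba}\equiv B_{f^{2}(\alpha)}$ while the other candidate continuations from the two sides vanish by Lemma \ref{lem:3.3} (giving (1)), and then that any further multiplication of $B_{\ba}$ or $B_{f^{2}(\alpha)}$ on either side falls into one of the vanishing cases of Lemma \ref{lem:3.3} (giving (2)). The main obstacle in either route is purely bookkeeping: one must verify that the translation of hypotheses, of the involution $\overline{\,\cdot\,}$, and of the paths $A_{\gamma}$, $B_{\gamma}$ under the opposite-algebra correspondence is correct, but no new geometric or combinatorial input beyond Lemma \ref{lem:3.3} and Lemma \ref{lem:4.1} is needed.
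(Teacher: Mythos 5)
Your overall route is exactly the paper's: for Lemma \ref{lem:4.3} the paper gives no computation at all, it simply remarks that the statement is Lemma \ref{lem:4.1} read in the opposite algebra, so reducing to Lemma \ref{lem:4.1} via $\La^{\op}$ (or, alternatively, redoing the Lemma \ref{lem:3.3}-based computation by hand, your second route) is the intended argument. However, your explicit dictionary for the opposite algebra is partly wrong. With $\alpha^{\ast}\colon t(\alpha)\to s(\alpha)$ the correct choice is $\tilde f(\alpha^{\ast})=(f^{-1}(\alpha))^{\ast}$, and then $\tilde g(\alpha^{\ast})=\overline{\tilde f(\alpha^{\ast})}=(g^{-1}(\alpha))^{\ast}$: the permutation $\tilde g$ of $\La^{\op}$ corresponds to $g^{-1}$ (the $g$-cycles get reversed), not to $f$ as you assert. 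It cannot correspond to $f$, since the weights $m_{\bullet}$, the parameters $c_{\bullet}$ and the socle generators $B_{\bullet}$ are all attached to $g$-cycles, and any anti-isomorphism must carry socle to socle; so the $\tilde g$-cycles of $\La^{\op}$ are forced to be the reversed $g$-cycles of $\La$. Consequently the reversal of $A_{\ba}$ is the $A$-path of $\La^{\op}$ beginning at $(g^{-2}(\ba))^{\ast}$ (equivalently, it is $\equiv$ the $\zeta$-element of $\La^{\op}$ starting with $\beta=(f(g(\alpha)))^{\ast}$), not $A_{\alpha^{\ast}}$; likewise $B_{\ba}$ reverses to the full $\tilde g$-cycle based at $s(\alpha)$, not to $B_{\alpha^{\ast}}$, which would sit at the wrong vertex.

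The good news is that, once the dictionary is corrected, the claims you actually use are true: relations (2) and (3) of Definition \ref{def:2.2} are interchanged, the reversed $\xi=\alpha g(\alpha)f(g(\alpha))$ is the $\zeta$-element of $\La^{\op}$ at $\beta=(f(g(\alpha)))^{\ast}$, and $\bar{\beta}$ comes (after reversal) from an arrow in the $g$-orbit of $f(\alpha)$, so, virtuality being constant on $g$-orbits, the hypothesis ``$f(\alpha)$ virtual'' of Lemma \ref{lem:3.2}(a) is precisely the hypothesis ``$\bar{\beta}$ virtual'' of Lemma \ref{lem:4.1} applied in $\La^{\op}$; translating the four conclusions of Lemma \ref{lem:4.1} back then yields exactly the four statements of Lemma \ref{lem:4.3}, with $B_{f^2(\alpha)}$ arising because $\tilde f^{2}(\bar{\beta})=(f^{2}(\alpha))^{\ast}$ (here one uses $g^{2}(f(\alpha))=f(\alpha)$, which holds since $f(\alpha)$ is virtual). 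So the reduction does go through, but you should redo the bookkeeping with $\tilde f\leftrightarrow f^{-1}$, $\tilde g\leftrightarrow g^{-1}$ rather than with the identifications you wrote down.
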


\medskip

\begin{lemma}\label{lem:4.4}  Let $\alpha$ be an arrow such that $A_{\alpha}\equiv \xi = \alpha g(\alpha)f(g(\alpha))$,
where  $n_{f(\alpha)} = n_{f(\alpha)}m_{f(\alpha)}=3$  and $\ba$ is virtual, as 
in Lemma \ref{lem:3.2}. Then\\
{\rm(1)} \ $A_{\alpha}J = \langle B_{\alpha}\rangle$ and $JA_{\alpha} = \langle B_{f^2(\ba)}\rangle$.\\
{\rm(2)} \  $B_{\alpha}J=0 = JB_{\alpha}$ and $B_{f^2(\ba)}J=0=JB_{f^2(\ba)}$. 
\end{lemma}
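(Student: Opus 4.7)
The plan is to deduce Lemma~\ref{lem:4.4} from Lemma~\ref{lem:4.2} applied to the opposite algebra, as the authors signal in the paragraph preceding Lemma~\ref{lem:4.3}.  The class of weighted surface algebras is closed under the opposite construction: the opposite quiver $Q^{\op}$ carries a triangulation permutation $f^{\op}$ whose orbits are the reversed $g$-orbits of $(Q,f)$, so that $g^{\op} = f^{-1}$; with the same weights and parameters this produces $\La^{\op}$ as a weighted surface algebra.  Under the resulting anti-isomorphism $\La \to \La^{\op}$ the type-(1) relations are preserved while the $\zeta$-relations of type~(2) and the $\xi$-relations of type~(3) are interchanged, so elements of shape $A_{\bullet}$, $B_{\bullet}$ correspond to the same shapes in $\La^{\op}$.

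First I would pin down this dictionary precisely, checking in particular that the ``unless'' clauses of Definition~\ref{def:2.2}(2),(3) are swapped correctly and that Observation~\ref{obs:2.3} and the virtual-arrow properties transform as expected.  Then I would verify that the hypotheses of Lemma~\ref{lem:4.4}---namely $\ba$ virtual, $n_{f(\alpha)} = m_{f(\alpha)} n_{f(\alpha)} = 3$, and $A_{\alpha} \equiv \xi$---translate under this dictionary to the hypotheses of Lemma~\ref{lem:4.2} for $\La^{\op}$, with the roles of the virtual arrow and the length-three $g$-cycle swapped in the expected way.  Applying Lemma~\ref{lem:4.2} to $\La^{\op}$ then yields the conclusions for $\La^{\op}$, and pulling back along the anti-isomorphism (which flips left and right multiplication) gives parts~(1) and~(2) of Lemma~\ref{lem:4.4} verbatim.

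Alternatively, one can mirror the proof of Lemma~\ref{lem:4.2} step by step inside $\La$ itself: the identities $A_{\alpha} f^{2}(\ba) = B_{\alpha}$ and $f^{2}(\ba) A_{\alpha} = B_{f^{2}(\ba)}$ follow from $g^{3}(\alpha) = g^{-1}(\alpha) = f^{2}(\ba)$ (since $m_{\alpha} n_{\alpha} = 4$), the derivation of Lemma~\ref{lem:3.2}(b) gives the length-two representative $A_{\alpha} \equiv \ba f(\ba)$ (valid whether or not $\ba$ is a loop, using $g(f(\alpha)) = f(\ba)$), and each remaining product then reduces either to a $\zeta$- or $\xi$-relation involving the virtual arrow $\ba$---zero by Lemma~\ref{lem:3.3}---or to an ordinary relation of Definition~\ref{def:2.2}(2),(3).

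The main obstacle is the bookkeeping: either the opposite-algebra dictionary must be spelled out, matching $\ba, f(\ba), f^{2}(\ba)$ of Lemma~\ref{lem:4.2} with their counterparts here, or else the eight products in the direct approach must be checked one by one, handling the loop and non-loop cases for $\ba$ uniformly as in Lemma~\ref{lem:3.3}.  Once the bookkeeping is done, no new relation-checking is needed: Lemma~\ref{lem:4.2} supplies the entire content of Lemma~\ref{lem:4.4}.
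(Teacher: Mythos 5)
Your proposal coincides with the paper's treatment: the paper offers no separate argument for Lemma~\ref{lem:4.4} at all, only the remark that it is Lemma~\ref{lem:4.2} read in the opposite algebra, which is exactly your primary route (and your sketched direct verification is just the mirror of the proof of Lemma~\ref{lem:4.2}). One slip to correct when you pin down the dictionary: under the canonical identification $\gamma\mapsto\gamma^{\op}$ one has $f^{\op}=f^{-1}$ and $g^{\op}=g^{-1}$ (both the $f$-orbits and the $g$-orbits get reversed), not ``$g^{\op}=f^{-1}$''; it is precisely because $g^{\op}$-cycles are reversed $g$-cycles that virtuality, weights, and the elements $A_{\bullet}$, $B_{\bullet}$ keep their shapes while relations (2) and (3) swap, and then the hypotheses of Lemma~\ref{lem:4.4} (with $g(f(\alpha))=f(\ba)$ for $\ba$ virtual, so $f(\alpha)$ and $f(\ba)$ lie on one $g$-cycle) translate into those of Lemma~\ref{lem:4.2} as you intend.
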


We will now review Lemma 4.5 of \cite{WSA-GV}.
It needs a minor modification.

\begin{lemma}
\label{lem:4.5}
Let $\alpha$ be an arrow in $Q$.
Then the following hold:
\begin{enumerate}[\rm(i)]
	\itemsep0em
 \item
  $B_{\alpha} \rad \Lambda = 0$.
 \item
  $B_{\alpha}$ is non-zero.
\item If $\alpha$ is not virtual and we do not have $n_{\alpha}=n_{\alpha}m_{\alpha}=3$ and $f(\alpha)$ virtual then $A_{\alpha}{\rm rad}^2\La = 0$.
\item Suppose $\alpha$ is virtual, or we have both $n_{\alpha}=n_{\alpha}m_{\alpha}=3$ and $f(\alpha)$ being virtual. Then $A_{\alpha}{\rm rad}^2\La = \langle B_{\alpha}\rangle$.
\end{enumerate}
\end{lemma}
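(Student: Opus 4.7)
The plan is to establish the four parts in order, using the relations of Definition \ref{def:2.2}, the vanishing statements of Lemma \ref{lem:3.3}, and the multiplication rules in Lemmas \ref{lem:4.1}--\ref{lem:4.4}. The common technical device is the fact, noted just before the subsection on the socle, that $B_\alpha \equiv \alpha f(\alpha) f^2(\alpha) \equiv A_{\ba} f^2(\alpha) = B_{\ba}$, which follows from relation (1) of Definition \ref{def:2.2} together with $f^3 = \id$.

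For (i), the product $B_\alpha \beta$ lies in $e_i \La$ for $i = s(\alpha) = t(B_\alpha)$, so since only the arrows $\alpha$ and $\ba$ start at $i$ it suffices to check $B_\alpha \alpha = 0$ and $B_\alpha \ba = 0$. Using the identification $B_\alpha \equiv A_{\ba} f^2(\alpha)$, each product becomes $A_{\ba}$ times a length-$2$ subpath starting at $f^2(\alpha)$. Applying relation (1) to the inner length-$2$ piece (note that $\alpha = f(f^2(\alpha))$ and $\ba = g(f^2(\alpha))$) rewrites the whole expression as a length-$3$ relation of type $\zeta$ or $\xi$ starting from a different arrow. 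In the generic case this vanishes by Definition \ref{def:2.2}(2),(3); in the exceptional configurations (some neighbouring arrow is virtual, or a 3-cycle condition holds) part (2) of the applicable one of Lemmas \ref{lem:4.1}--\ref{lem:4.4} provides the vanishing.

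For (ii), non-vanishing of $B_\alpha$ is structural: one writes down an explicit monomial basis of $\La$ by combining relation (1) of Definition \ref{def:2.2} with the zero relations, and verifies that (modulo the identification $B_\alpha \equiv B_{\ba}$) each $B_\alpha$ appears as a basis element spanning the socle of $e_i\La$, matching the dimension formula of \cite{WSA-GV}. For (iii) and (iv), $A_\alpha \rad^2\La$ is spanned by products $A_\alpha \beta \gamma$ for $\beta$ an arrow at $t(A_\alpha)$ and $\gamma$ an arrow at $t(\beta)$. One choice of $\beta$ continues the $g$-cycle of $\alpha$, so $A_\alpha \beta = B_\alpha$ and $B_\alpha \gamma = 0$ by (i). For the other choice of $\beta$, the product $A_\alpha \beta \gamma$ is (via relation (1) applied in reverse to the leading segment) a type-$\zeta$ or type-$\xi$ expression. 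Under the hypothesis of (iii), Definition \ref{def:2.2}(2),(3) forces this to vanish, giving $A_\alpha \rad^2\La = 0$. Under the hypothesis of (iv), the applicable one of Lemmas \ref{lem:4.1}--\ref{lem:4.4}(1) gives $A_\alpha J = \langle B_\alpha\rangle$, so $A_\alpha\rad^2\La \subseteq \langle B_\alpha\rangle$; the reverse inclusion is obtained by exhibiting an explicit $\eta\in\rad^2$ with $A_\alpha\eta \equiv B_\alpha$, for instance $\eta = f(\alpha) f^2(\alpha)$ when $\alpha$ is virtual (so that $A_\alpha \eta = \alpha f(\alpha) f^2(\alpha) \equiv B_\alpha$ up to a non-zero scalar).

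The main obstacle is the case analysis in (i) and (iii): each reduction to a $\zeta$- or $\xi$-expression requires keeping track of whether the various neighbouring arrows are virtual and whether a length-$3$ $g$-cycle with virtual neighbour is present, in order to invoke the correct relation or exception clause. The advantage of the framework developed in Sections 2--4 is that exactly these configurations have been catalogued in Lemmas \ref{lem:4.1}--\ref{lem:4.4}, so once the reduction to type $\zeta$ or $\xi$ has been performed, vanishing or identification with $B_\alpha$ follows immediately from the appropriate lemma.
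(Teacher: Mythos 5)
Your overall strategy (dispatch the exceptional configurations via Lemmas \ref{lem:4.1}--\ref{lem:4.4}, treat the rest by rewriting into $\zeta$- or $\xi$-expressions, then deduce (iii),(iv) from the generators of $A_\alpha J^2$ together with (i) and Lemma \ref{lem:3.3}) is the paper's strategy, and your parts (ii) and (iii) are fine. But two steps do not go through as you describe them. First, in (iv) you claim that ``the applicable one of Lemmas \ref{lem:4.1}--\ref{lem:4.4}(1) gives $A_\alpha J=\langle B_\alpha\rangle$.'' Under the hypotheses of (iv) those lemmas, applied with the roles of $\alpha$ and $\ba$ interchanged, only give $A_{\ba}J=\langle B_{\ba}\rangle$; they say nothing about $A_\alpha J$, and indeed $A_\alpha J$ is strictly larger: when $\alpha$ is virtual it contains $\alpha f(\alpha)\equiv A_{\ba}$, and when $n_\alpha=m_\alpha n_\alpha=3$ with $f(\alpha)$ virtual it contains $\alpha g(\alpha)f(g(\alpha))\equiv\alpha f(\alpha)\equiv A_{\ba}$, which is not in $\langle B_\alpha\rangle$ in general. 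Worse, if $A_\alpha J$ were equal to $\langle B_\alpha\rangle$, then part (i) would force $A_\alpha\rad^2\La\subseteq B_\alpha J=0$, contradicting the very equality you want. The upper bound $A_\alpha\rad^2\La\subseteq\langle B_\alpha\rangle$ has to be obtained as in the paper, by listing the generators of $A_\alpha J^2$: in the virtual case the two $\zeta$-/$\xi$-type generators vanish by Lemma \ref{lem:3.3}, $\alpha g(\alpha)g^2(\alpha)=B_\alpha g^2(\alpha)=0$ by (i), and $\alpha f(\alpha)f^2(\alpha)\equiv B_\alpha$ survives; in the $3$-cycle case $A_\alpha J^2=\alpha f(\alpha)J=\langle B_\alpha\rangle$, again by Lemma \ref{lem:3.3} and (i). Your exhibited element $\eta$ only gives the (easy) reverse inclusion.

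Second, in (i) your case split is not exhaustive for the reduction you propose. After excluding the cases where one of $\alpha,\ba,f(\alpha),f(\ba)$ is virtual, you rewrite $B_\alpha\alpha$, $B_\alpha\ba$ so that a triple of consecutive arrows becomes a $\zeta$- or $\xi$-relation ``starting from a different arrow''; but the exception clauses of Definition \ref{def:2.2}(2),(3) for that relation are then governed by arrows outside $\{\alpha,\ba,f(\alpha),f(\ba)\}$. For instance $B_\alpha\ba\equiv\alpha\,[f(\alpha)f^2(\alpha)\ba]$, and the bracket is the $\zeta$-expression for $f(\alpha)$, which is an exception when $f(g(\alpha))$ is virtual and $m_\alpha=1$, $n_\alpha=3$ --- a configuration compatible with your ``generic'' assumption (it is exactly the $Q'$-type situation this corrigendum is about) and not covered by the statements of Lemmas \ref{lem:4.1}--\ref{lem:4.4} for the arrow $\alpha$; closing it would need the finer identity $\alpha A_{f(\alpha)}=0$ hidden inside the proof of Lemma \ref{lem:4.2}. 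The paper sidesteps this by choosing the rewriting $B_\alpha\alpha\equiv\alpha g(\alpha)f(g(\alpha))f^2(g(\alpha))$ and $B_\alpha\ba\equiv\ba g(\ba)f(g(\ba))f^2(g(\ba))$, whose leading triples are relations whose only possible exceptions are ``$f(\alpha)$ virtual'' or ``$f^2(\alpha)$ virtual'' (equivalently $\ba$ virtual), respectively ``$f(\ba)$'' or ``$\alpha$'' virtual --- precisely the cases already excluded. That choice is the missing idea in your plan.
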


%{\it Proof } 
\begin{proof}
 (i) Let $\alpha$ be an arrow. 
 We have proved that $B_{\alpha}J=0$ when 
$\ba$ is virtual and when $f(\alpha)$ is virtual  (in Lemmas  \ref{lem:4.1} and \ref{lem:4.2}). 
Interchanging $\alpha, \ba$ we have  proved $B_{\ba}J=0$  when  $\alpha$ is virtual and when $f(\ba)$ is virtual. 
Now, $B_{\alpha}\equiv B_{\ba}$. Therefore to complete the proof of (i) we may assume 
that none of $\alpha, \ba, f(\alpha)$ and $f(\ba)$ is virtual.

We have 
\begin{align*} B_{\alpha}\alpha  \equiv &\alpha f(\alpha)f^2(\alpha) \alpha\cr
\equiv & \alpha g(\alpha) f(g(\alpha)) f^2(g(\alpha)).
\end{align*}
The product of the first three arrows is zero; it is not one of the exceptions.
Similarly
$$
B_{\alpha}\ba \equiv \ba f(\ba)f^2(\ba)\ba
\equiv \ba g(\ba)f(g(\ba))f^2(g(\ba))
= 0.
$$

(ii) This follows 
from the relations.

(iii) With the assumptions we have $A_{\alpha}g(f(\ba)) \equiv \ba f(\ba)g(f(\ba)) = 0$, it is not an exception, and
$A_{\alpha}f^2(\ba)= B_{\alpha}$. 

(iv) 
Assume first $\alpha$ is virtual, we consider the four generators for $A_{\alpha}{\rm rad}^2\La$.
Two of them are zero by Lemma \ref{lem:3.3}, and $\alpha f(\alpha)f^2(\alpha) \equiv B_{\alpha}$, and 
furthermore $\alpha g(\alpha)g^2(\alpha) \equiv B_{\alpha} g^2(\alpha) =0$ by part (i).

Now assume $n_{\alpha} = n_{\alpha}m_{\alpha} = 3$ and $f(\alpha)$ is virtual. 
Then  
$$A_{\alpha}J^2 = \alpha g(\alpha)g^2(\alpha)J + \alpha g(\alpha)f(g(\alpha))J = 0 + \alpha f(\alpha)J$$
Now $\alpha f(\alpha)J = \langle B_{\alpha}\rangle $ since 
$\alpha f(\alpha) g(f(\alpha)) = 0$ by Lemma \ref{lem:3.3}.
%$\Box$
\end{proof}

\bigskip

%{\tt  I have checked 4.7 which I think is OK. I am not sure about 4.8, I need to look at it again.}

For general weighted surface algebras, for some parameters the socle of $e_i\La$ contains an element which is not a multiple of $B_{\alpha}$. These
have to be identified and excluded. One part of the proof in \cite{WSA-GV} involves relations (2) and (3), we will now revise this.
\medskip

\begin{lemma} \label{lem:4.6}[Lemma 4.10 of \cite{WSA-GV}]
Assume $\alpha, \ba$ are not virtual. Let  $\zeta = \zeta_1+a\zeta_2$  where $\zeta_1, \zeta_2$ are  initial submonomials of $B_{\alpha}, B_{\ba}$ 
and $a\in K^*$ such that $\zeta J=0$ but $\zeta\not\in \langle B_{\alpha}\rangle$. Then 
$\zeta_1=\alpha g(\alpha)$, $\zeta_2 = \ba g(\ba)$ and both $f(\alpha)$ and $f(\ba)$ are virtual. Moreover, for certain parameters, such 
$\zeta$ exists.\end{lemma}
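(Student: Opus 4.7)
The plan is to exploit $\zeta J = 0$ via right multiplication by the two arrows starting at the common endpoint of $\zeta_1, \zeta_2$, and to deduce severe restrictions on their lengths and on the local arrow configuration.

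First I write $\zeta_1 = \alpha g(\alpha)\cdots g^{k-1}(\alpha)$ with $1 \leq k \leq m_\alpha n_\alpha$ and $\zeta_2 = \ba g(\ba)\cdots g^{l-1}(\ba)$ with $1 \leq l \leq m_{\ba} n_{\ba}$. If $k = m_\alpha n_\alpha$ then $\zeta_1 = B_\alpha \in \langle B_\alpha\rangle$, so $a\zeta_2$ satisfies $a\zeta_2 J = 0$ but does not lie in $\langle B_\alpha\rangle$; by Lemma \ref{lem:4.5}(i)--(ii) this forces $\zeta_2 \in \langle B_\alpha\rangle$, a contradiction. Hence $k < m_\alpha n_\alpha$, and likewise $l < m_{\ba} n_{\ba}$. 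Next, if $t(\zeta_1) \neq t(\zeta_2)$ then $\zeta\cdot g^k(\alpha) = \zeta_1 g^k(\alpha)$ is a prefix of $B_\alpha$ of length $k+1 \leq m_\alpha n_\alpha$, and is non-zero by Lemma \ref{lem:4.5}(ii), contradicting $\zeta J = 0$. Hence the common endpoint $p := t(\zeta_1) = t(\zeta_2)$ is well defined.

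The two arrows at $p$ must be simultaneously $\{g^k(\alpha), f(g^{k-1}(\alpha))\}$ and $\{g^l(\ba), f(g^{l-1}(\ba))\}$. Either these pairings coincide directly, placing $\alpha$ and $\ba$ in a single $g$-cycle that visits $i$ twice (a special configuration that I handle separately and which yields $\zeta \in \langle B_\alpha\rangle$), or they cross, giving $g^k(\alpha) = f(g^{l-1}(\ba))$ and $g^l(\ba) = f(g^{k-1}(\alpha))$. In the latter case I compute $\zeta \cdot g^k(\alpha)$ modulo $I$: the first summand is the length-$(k+1)$ prefix of $B_\alpha$, and the second, using relation (1) of Definition \ref{def:2.2} applied to the terminal pair $g^{l-1}(\ba)f(g^{l-1}(\ba))$, becomes a scalar multiple of $\ba g(\ba)\cdots g^{l-2}(\ba)\,A_{\overline{g^{l-1}(\ba)}}$. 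For these to cancel modulo $I$ the resulting monomial must reduce to the prefix of $B_\alpha$, and an inductive analysis via relations (2), (3) together with the exception lists in Lemmas \ref{lem:3.1} and \ref{lem:3.2} shows this is possible only when $k = l = 2$ and both length-$3$ products $\alpha g(\alpha)f(g(\alpha))$, $\ba g(\ba)f(g(\ba))$ are non-zero in $\La$. A symmetric computation of $\zeta \cdot f(g^{k-1}(\alpha))$ confirms the same conclusion. Since $\alpha, \ba$ are not virtual, Lemma \ref{lem:3.2}(a) then forces $f(\alpha)$ and $f(\ba)$ to be virtual.

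For existence I take $\zeta_1 = \alpha g(\alpha)$, $\zeta_2 = \ba g(\ba)$ in the configuration where both $f(\alpha), f(\ba)$ are virtual. By Lemma \ref{lem:3.2}(a), $\zeta_1 f(g(\alpha)) = c_{f(\alpha)}c_{\ba}A_{\ba}$ and $\zeta_2 f(g(\ba)) = c_{f(\ba)}c_\alpha A_\alpha$. The simultaneous virtuality of $f(\alpha)$ and $f(\ba)$ forces the local geometry so that $g^2(\alpha) = f(g(\ba))$ and $g^2(\ba) = f(g(\alpha))$, so the terms $\zeta_1 g^2(\alpha)$ and $\zeta_2 g^2(\ba)$ are again governed by the same exceptional identifications. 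For the appropriate parameter choices, the two equations $\zeta\cdot g^2(\alpha) = 0$ and $\zeta\cdot f(g(\alpha)) = 0$ both reduce to the same linear condition on $a$; solving it (using $B_\alpha \equiv B_{\ba}$ to identify the socle-generators) produces the desired $\zeta \in \soc(e_i\La)\setminus \langle B_\alpha\rangle$. The main obstacle is the case analysis in the preceding paragraph: one must track how right-multiplication by the $f$-turn propagates through relation (1) into the opposite $g$-cycle, and systematically rule out every $k, l > 2$. This is delicate because each exception in Lemmas \ref{lem:3.1}, \ref{lem:3.2} potentially opens a new branch, and one must verify that none of these branches produces an unexpected non-trivial socle element.
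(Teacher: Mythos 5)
Your setup (both summands are proper prefixes, they end at a common vertex, and right-multiplication by the two arrows at that vertex yields two identities) coincides with the paper's, but the actual content of the lemma is missing from your argument. Everything hinges on showing that the cancellation forces $\zeta_1=\alpha g(\alpha)$ and $\zeta_2=\ba g(\ba)$, and at precisely this point you write that ``an inductive analysis via relations (2), (3) together with the exception lists in Lemmas \ref{lem:3.1} and \ref{lem:3.2} shows this is possible only when $k=l=2$'' --- and your closing sentence concedes that this case analysis is the main obstacle and has not been carried out. No mechanism for excluding longer prefixes is given; likewise, the ``pairings coincide directly'' branch (both continuations along the $g$-cycles matching) is deferred with no argument, and the short cases where one of the prefixes is a single arrow are not addressed at all. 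So the proposal is an outline of the statement rather than a proof.

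For comparison, the paper closes exactly this gap with two concrete devices. First, since an initial proper submonomial of $B_{\ba}$ can occur in a relation only if it equals $A_{\ba}$, the second term of the first identity must be $A_{\ba}$ itself. Second, premultiplying that identity by $f^2(\alpha)$ gives $f^2(\alpha)\alpha g(\alpha)\eta + aB_{f^2(\alpha)}=0$; since $B_{f^2(\alpha)}\neq 0$, the product $\mu=f^2(\alpha)\alpha g(\alpha)$ is nonzero, hence is one of the exceptions of Lemma \ref{lem:3.1}, so $\mu\equiv A_{f^2(\alpha)}$ and $\mu J^2=0$ by Lemma \ref{lem:4.1}; but $\mu\eta$ is a nonzero multiple of $B_{f^2(\alpha)}$, which forces $\eta$ to be short, i.e.\ $\zeta_1=\alpha g(\alpha)$ ($r=1$, your $k=2$). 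The first identity then reads $\alpha g(\alpha)f(g(\alpha)) + aA_{\ba}=0$, and because $\ba$ is not virtual only case (a) of Lemma \ref{lem:3.2} can occur, which is what makes $f(\alpha)$ virtual and pins down $a=-c_{f(\alpha)}c_{\ba}$; symmetrically $\zeta_2=\ba g(\ba)$, $f(\ba)$ is virtual, $a=-(c_{\alpha}c_{f(\ba)})^{-1}$, and existence is exactly the coincidence of these two scalar conditions. The remaining configurations (both prefixes of length one, or one of length one) require the separate arguments of the paper's Cases 2 and 3, using the $g$-cycle/loop structure and Observation \ref{obs:2.3}(1). Without these ingredients (or genuine substitutes for them), your argument does not establish the lemma.
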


\medskip

%{\it Proof } \   
\begin{proof}
Let $\zeta_1 = \alpha g(\alpha)\ldots g^r(\alpha)$ and $\zeta_2=\ba g(\ba)\ldots g^s(\ba)$. We assume $\zeta J=0$ and $\zeta$ is not a scalar
multiple of $B_{\alpha}$. Then $\zeta_1$ and $\zeta_2$ end at the same vertex, and the arrows starting at this vertex are
$g^{r+1}(\alpha)$ and $g^{s+1}(\ba)$
and as well $f(g^r(\alpha))$ and $f(g^s(\ba))$. It follows that
$g^{r+1}(\alpha) = f(g^s(\ba)), \ \ f(g^r(\alpha)) = g^{s+1}(\ba).
$
This means we have the two identities 
\begin{align*}
\alpha g(\alpha) \ldots  g^r(\alpha)f(g^r(\alpha)) + a \ba g(\ba)\ldots g^{s+1}(\ba)& = 0, \cr
\alpha g(\alpha) \ldots  g^r(\alpha)g^{r+1}(\alpha)  + a \ba g(\ba)\ldots f(g^{s}(\ba)) &= 0 .
%\leqno{(2)}
\end{align*}
Note that in each case, the individual terms must be non-zero (a proper submonomial of $B_{\alpha}$ or $B_{\ba}$ is non-zero).
Now, an initial proper submonomial of $B_{\alpha}$, respectively $B_{\ba}$, 
can only occur in a relation if it is equal to $A_{\alpha}$, respectively $A_{\ba}$. 

{\sc Case 1. } Assume $r, s\geq 1$.
Note that the second monomial in the first equation is $A_{\ba}$. 
We premultiply the first equation with $f^2(\alpha)$,  this gives
$$f^2(\alpha)\alpha g(\alpha) \eta + a B_{f^2(\alpha)} = 0.
$$
Here $\eta = g^2(\alpha)\ldots f(g^r(\alpha))$. 
Since $B_{f^2(\alpha)}\neq 0$ we deduce that $\mu:= f^2(\alpha)\alpha g(\alpha)\neq 0$. So 
this is an exception for the relation (2). By Lemma \ref{lem:3.1}, we have $\mu \equiv A_{f^2(\alpha)}$ and by Lemma \ref{lem:4.1} 
$\mu J^2=0$. But $\mu\eta\neq 0$, in fact is a multiple of $B_{f^2(\alpha)}$, and therefore $r=1$. 
With this, the first equation is
$$\alpha g(\alpha)f(g(\alpha)) + aA_{\ba}=0.\leqno{(*)}$$
By Lemma \ref{lem:3.2} and since $\ba$ is not virtual we have $f(\alpha)$ is virtual and  
$(*)$ holds with $a= -c_{f(\alpha)}c_{\ba}$. 

The same argument for the second equation  shows that $s=1$. Moreover, 
the corresponding identity holds  with $a= -(c_{\alpha}c_{f(\ba)})^{-1}$. 
So when $r=s=1$ and these two parameters are equal we have indeed such an element $\zeta$.

We must show that otherwise no such $\zeta$ exists.

{\sc Case 2. } Assume $r=s=0$. Then the identities are 
$$\alpha f(\alpha) + a \ba g(\ba) =0, \ \ \alpha g(\alpha) + a \ba f(\ba) = 0.
$$
Here $f(\alpha)=g(\ba)$ and $f(\ba) = g(\alpha)$, they are not virtual. Moreover, $|A_{\alpha}|=2$ and $|A_{\ba}|=2$. 
Not both of $\alpha, \ba$ can be loops, say $\alpha$ is not a loop. Then $g$ has a cycle
$(\alpha \ g(\alpha) \ f^2(\ba))$.  Since  $g(\alpha)= f(\ba)$ we have
the contradiction that 
$$g(f(\ba)) = g(g(\alpha)) = f^2(\ba) = f(f(\ba)).$$

{\sc Case 3. } Assume $r  \geq 1$ and $s=0$,
then the argument in Case 1 applies to $r$ and we can deduce that $r=1$. Moreover we have
$|A_{\ba}|=2$ and $f(\alpha)$ is virtual.
We show that this cannot happen. 
The two equations are
\begin{align*} \alpha g(\alpha)f(g(\alpha)) + a \ba g(\ba) &=0,\cr
\alpha g(\alpha) g^2(\alpha) + a \ba f(\ba)&=0.
\end{align*}
In particular $\xi:= \alpha g(\alpha)f(g(\alpha))\neq 0$. We must have case (a) of Lemma \ref{lem:3.2}, so that $f(\alpha)$ is virtual and
$|A_{\ba}|=2$. We also have $|A_{\alpha}|=3$,  but this contradictions Observation~\ref{obs:2.3}(1) which says that $|A_{\alpha}|$ must be at
least $5$. 
This completes the proof.
%$\Box$
\end{proof}

\bigskip

\section{Revising Sections 5 and 6 of \cite{WSA-GV}}

The modified definition plays a role for Lemma 5.5,  and Lemma 5.6 of \cite{WSA-GV} and we will now revise these.

Assume first that $\ba$ is  a virtual loop, then $\alpha$ is not virtual.
Then the
quiver $Q$ contains a subquiver
\[
 \xymatrix{
  i \ar@(dl,ul)[]^{\ba} \ar@<+.5ex>[r]^{\alpha}
   & j \ar@<+.5ex>[l]^{f(\alpha)}  
 }
\]
and $f$ has a cycle $(\ba \ \alpha \ f(\alpha))$. 
Note that by Condition (3) of the Assumption (preceding Definition \ref{def:2.2}) we have $m_{\alpha}n_{\alpha}\geq 4$. 

Let $\gamma$ be the other arrow 
starting at vertex $j$, and $\delta$ be the other arrow ending at $j$. 

\begin{lemma}\label{lem:5.1} Assume $\La$ is not the singular triangle algebra. Then there is an exact sequence of $\La$-modules
	$$0 \to \Omega^{-1}(S_i) \to P_j \to P_j \to  \Omega(S_i)\to 0
	$$
	which gives rise to a periodic minimal projective resolution of $S_i$ 
	in ${\rm mod}\La$. In particular $S_i$ is periodic of period $4$.
\end{lemma}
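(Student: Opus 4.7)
The plan is to build the two maps in the sequence explicitly, verify exactness, and conclude period four by splicing.

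\textbf{Step 1 (The rightmost map).} Because $\ba$ is virtual, relation~(1) of Definition~\ref{def:2.2} gives $\ba = c_{\ba}^{-1}\alpha f(\alpha)$, hence $\ba\in\alpha\rad\La$. Thus $\rad P_i=\alpha\La$, and the map $h:P_j\to P_i$ sending $e_j\mapsto\alpha$ is a projective cover of $\Omega(S_i)=\rad P_i$.

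\textbf{Step 2 (The middle map).} Let $C:=\gamma\, g^2(\alpha)\cdots g^{m_{\alpha}n_{\alpha}-2}(\alpha)$, a path of length $m_{\alpha}n_{\alpha}-2\geq 2$ from $j$ to $j$ (well-defined since $g^{m_{\alpha}n_{\alpha}-2}(\alpha)$ ends at $s(g^{-1}(\alpha))=s(f(\alpha))=j$), chosen so that $\alpha C=A_{\alpha}$. Set
$$\omega \;:=\; f(\alpha)\alpha\,-\,c_{\ba}c_{\alpha}\,C\;\in\;e_j\La e_j.$$
By Lemma~\ref{lem:3.1}(a), $\alpha f(\alpha)\alpha=c_{\ba}c_{\alpha}A_{\alpha}$, so $\alpha\omega=0$. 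Let $g:P_j\to P_j$ be left multiplication by $\omega$; clearly $\Im(g)=\omega\La\subseteq\ker h$. For the reverse inclusion I would show that any $y\in P_j$ with $\alpha y=0$ is a $\La$-multiple of $\omega$ by analyzing paths starting at $j$ and invoking Lemmas~\ref{lem:4.1} and \ref{lem:4.5} to control the vanishing of paths through virtual arrows, then matching the dimension $\dim\ker h=\dim P_j-\dim P_i+1$.

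\textbf{Step 3 (The leftmost map, i.e.\ identifying $\ker g$).} Consider $\phi:P_i\to P_j$ given by $e_i\mapsto f(\alpha)$. Using $c_{\alpha}=c_{\gamma}$ (parameters are constant on $g$-cycles) together with $Cf(\alpha)=A_{\gamma}$, we get
$$\omega f(\alpha)=f(\alpha)\alpha f(\alpha)-c_{\ba}c_{\alpha}Cf(\alpha)=c_{\ba}c_{\gamma}A_{\gamma}-c_{\ba}c_{\alpha}A_{\gamma}=0,$$
so $\Im(\phi)\subseteq\ker g$. Moreover $f(\alpha)B_{\alpha}=f(\alpha)A_{\alpha}f(\alpha)=B_{f(\alpha)}f(\alpha)=0$ by Lemma~\ref{lem:4.5}(i), so $\phi$ factors through $P_i/\soc(P_i)\cong\Omega^{-1}(S_i)$ (using that $\La$ is symmetric). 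A dimension count $\dim P_i/\soc(P_i)=\dim P_i-1=\dim\ker g$ then upgrades the inclusion to an isomorphism $\Omega^{-1}(S_i)\xrightarrow{\sim}\ker g$, completing the four-term exact sequence.

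\textbf{Step 4 (Periodicity).} Splicing yields $\Omega^4(S_i)\cong S_i$, so the period divides four. The tops of $\Omega(S_i)$ and $\Omega^2(S_i)$ are $S_j\neq S_i$, while $\Omega^3(S_i)\cong P_i/\soc(P_i)$ has dimension $\dim P_i-1>1$; this is where the assumption that $\La$ is not the singular triangle algebra enters. Hence none of $\Omega^k(S_i)$, $1\leq k\leq 3$, is isomorphic to $S_i$, and the period is exactly four.

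The main obstacle is upgrading the inclusions $\omega\La\subseteq\ker h$ and $\Im(\phi)\subseteq\ker g$ to equalities. Each requires a careful analysis of how the virtual loop at $i$, combined with the exceptional cases of relations~(2) and (3) from Definition~\ref{def:2.2}, dictates which paths vanish in $\La$. The combinatorial tools developed in Section~4, notably Lemmas~\ref{lem:4.1}--\ref{lem:4.5}, are designed precisely to make these dimension computations tractable.
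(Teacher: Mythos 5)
Your construction is essentially the paper's: your $\omega$ is exactly the element $\vf= f(\alpha)\alpha-c_{\ba}c_{\alpha}A_{\alpha}'$ used there, your Step 1 and the identities $\alpha\omega=0$, $\omega f(\alpha)=0$ are the same computations, and the strategy (show $\omega\La\subseteq\Omega^2(S_i)$, match dimensions, then use $\omega f(\alpha)=0$ to embed $\Omega^{-1}(S_i)$ into the kernel of the middle map) is the paper's strategy. However, there is a genuine gap: the heart of the argument, namely that $\omega\La$ really is all of $\{x\in e_j\La\mid \alpha x=0\}$, is only announced ("analyzing paths \dots invoking Lemmas \ref{lem:4.1} and \ref{lem:4.5} \dots matching the dimension") and never carried out. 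This is precisely the nontrivial part. Since $\omega f(\alpha)=0$, one has $\omega J=\omega\gamma\La$, and everything hinges on computing $\omega\gamma$, which requires a case split on $m_{\alpha}n_{\alpha}$ and careful treatment of the exceptional relations of Definition \ref{def:2.2}(2),(3) (the very point of this corrigendum): for $m_{\alpha}n_{\alpha}\geq 5$ one must argue that $A_{\alpha}'\gamma$ is zero or lies in the socle, and for $m_{\alpha}n_{\alpha}=4$ with $f(\gamma)$ virtual one finds $\omega\gamma=(1-c_{\ba}c_{\alpha}^2c_{f(\gamma)})\,f(\alpha)\alpha\gamma$.

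That last formula also shows that you have misplaced the hypothesis. The exclusion of the singular triangle algebra is not needed in your Step 4: the singular triangle algebra has the same quiver and the same dimensions as the non-singular one (it differs only by a parameter value), so an argument via $\dim P_i/\soc(P_i)>1$ cannot be where it enters. It is needed exactly in the omitted computation of Step 2: for the singular parameter the scalar $1-c_{\ba}c_{\alpha}^2c_{f(\gamma)}$ vanishes, $\omega\gamma=0$, and then $\dim\omega\La<m_{\alpha}n_{\alpha}-1$, so $\omega\La\subsetneq\ker h$ and the proposed four-term sequence is not exact. A smaller point: in Step 3, equality of $\dim P_i/\soc(P_i)$ and $\dim\ker g$ alone does not upgrade the inclusion $f(\alpha)\La\subseteq\ker g$ to an isomorphism; you still need that the induced map $P_i/\soc(P_i)\to f(\alpha)\La$ is injective, i.e.\ that $\dim f(\alpha)\La=\dim P_i-1$, which again is a (short) computation with the relations rather than a formal consequence.
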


\begin{proof}
	We have $\Omega(S_i) = \alpha \La$, and we take $\Omega^2(S_i)$ as 
$$
  \Omega^2(S_i) = \{ x\in e_j\La \mid \alpha x = 0\}.
$$
As in the proof of \cite[Lemma~5.5]{WSA-GV} we see that $\vf\La \subseteq \Omega^2(S_i)$ where  
$$\vf:= f(\alpha)\alpha  - c_{\ba}c_{\alpha} A_{\alpha}'$$
with  $\alpha A_{\alpha}' = A_{\alpha}$. 
The module $\Omega^2(S_i)$ has dimension $m_{\alpha}n_{\alpha}-1$. 
We will  show that 
$\vf \La$ has the same dimension,  which will give equality.

\smallskip

First we observe that $\vf f(\alpha)=0$. Namely
$$\vf f(\alpha) = f(\alpha)\alpha f(\alpha) - c_{\alpha}c_{\ba}A_{\alpha}'f(\alpha)
= f(\alpha)c_{\ba}\ba - c_{\alpha}c_{\ba}A_{\gamma} =0$$
since $f(\alpha)\ba = c_{\gamma}A_{\gamma}$ and $c_{\alpha}=c_{\gamma}$. 
Hence $\vf J$ is generated by $\vf \gamma$. 

\smallskip
The $g$-cycle of $\alpha$ is of the form $(\delta \ f(\alpha) \ \alpha \ldots)$

\smallskip
(i) \ Assume first that  this has length $3$, then $Q$ has only two vertices. In this case, by the assumption,
$m_{\alpha}n_{\alpha}\geq 4$ so that $m_{\alpha}\geq 2$. 
We have in this case 
%$A_{\alpha}'f(\alpha) = \mu \gamma^2 f(\alpha)$ 
$A_{\alpha}'\gamma = \mu \alpha \gamma^2$ 
for some monomial $\mu$, and 
%$\gamma^2f(\alpha)=0$ 
$\alpha \gamma^2 = 0$ 
(by relation (3) of Definition \ref{def:2.2}).
It follows that $\vf \La$ has basis $\{ \vf, f(\alpha)\alpha\gamma, \ldots, B_{f(\alpha)} \}$ and has the required dimension.

(ii) \  Now assume that $m_{\alpha}n_{\alpha} = 4$. Then $Q$ has three vertices and $g$ is equal to
$$(\ba) (\alpha \ \gamma \ \delta \ f(\alpha) (f(\gamma)).
$$
We consider first the case when $f(\gamma)$ also is virtual, that is $\La$ is a triangle algebra.
Then we have using the relations that
$$A_{\alpha}'\gamma = \gamma\delta\gamma = 
c_{f(\gamma)}c_{f(\alpha)} A_{f(\alpha)}
$$
and $A_{f(\alpha)} = f(\alpha) \alpha\gamma$. It follows that
$$\vf\gamma = (1- c_{\ba}c_{\alpha}^2c_{f(\gamma)})f(\alpha)\alpha\gamma.
$$
We assume that $\La$ is not the singular triangle algebra, which means that the scalar in this identity is non-zero.
It follows that $\vf\La$ has basis $\{ \vf, \ f(\alpha)\alpha\gamma, \ B_{f(\alpha)}\}$ of the required size.

\medskip

(iii) Now assume $m_{\alpha}n_{\alpha}\geq 5$ so that $A_{\alpha}'$ has length $\geq 3$. Then we have
$$A_{\alpha}'\gamma = \mu g^{-1}(\delta) \delta f(\delta)
$$ 
with $\mu$ a monomial of length $\geq 1$
which is either zero or $\equiv A_{\beta}$ for $\beta = g^{-1}(\delta)$ or $\overline{g^{-1}(\delta)}$. In the second case, $JA_{\beta}$ is in the
socle. Hence $\vf \gamma = f(\alpha)\alpha\gamma - \omega$ with $\omega$ in the socle or zero. 
Then $\vf\La$ has basis $\{ \vf, \vf\gamma,  \ f(\alpha)\alpha\gamma g(\gamma), \ldots, B_{f(\alpha)}\}$ and the dimension is
as stated.

Since $\vf f(\alpha) = 0$, we know $\Omega^3(S_i)$ contains $f(\alpha)\La$ which is isomorphic to $\Omega^{-1}(S_i)$. One sees that they have the
same dimension, hence they are equal.
\end{proof}

\bigskip

Now assume $\ba$ is virtual but not a loop. Then $\alpha$ is not virtual,
and it cannot be a loop (see 2.1.1 and 3.1).

Then the quiver around $\ba$ has the following form
\[
%  \xymatrix@R=2pc@C=1.5pc{
%  \xymatrix@R=3.5pc@C=1.8pc{
  \xymatrix@R=3.pc@C=1.8pc{
%  \xymatrix@C=.8pc{
    & j
    \ar[rd]^{f(\alpha)}
    \\
    i
    \ar[ru]^{\alpha}
    \ar@<-.5ex>[rr]_{\ba}
    && \bullet
    \ar@<-.5ex>[ll]_{f^2(\alpha)}
 \ar[ld]^{f(\ba)}
    \\
    & y
    \ar[lu]^{f^2(\ba)}
  }
\]

\medskip

\begin{lemma}\label{lem:5.2}
Assume $\La$ is not the singular spherical algebra.
Then there is an exact sequence of $\La$-modules
	$$0\to \Omega^{-1}(S_i)\to P_y \to P_j \to \Omega(S_i)\to 0
	$$
	which gives rise to a minimal projective resolution of period $4$.
\end{lemma}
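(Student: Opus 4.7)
The strategy is to follow the proof of Lemma~\ref{lem:5.1} closely, replacing the loop by the non-loop virtual $\ba$. Set
$$\vf := f(\alpha)\,f(\ba) - c_{\ba}\,c_{\alpha}\,A_{\alpha}' \ \in\ e_j\La e_y,$$
where $A_{\alpha}'\in e_j\La$ is defined by $\alpha A_{\alpha}'=A_{\alpha}$. That $A_{\alpha}'$ indeed ends at $y$ follows from the identity $g^{-1}(\alpha)=f^2(\ba)$, since $g(f^2(\ba))=\overline{f^3(\ba)}=\overline{\ba}=\alpha$; thus the last arrow of $B_{\alpha}$ is $f^2(\ba)$ and $A_{\alpha}'$ ends at $s(f^2(\ba))=y$. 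Applying relation~(1) of Definition~\ref{def:2.2} first to $\alpha$ and then to $\ba$ (using $A_{\ba}=\ba$ because $\ba$ is virtual) gives
$\alpha f(\alpha)f(\ba)=c_{\ba}\,\ba\, f(\ba)=c_{\ba}c_{\alpha}A_{\alpha}$,
so $\alpha\vf=0$. Hence left multiplication by $\vf$ defines a map $P_y\to P_j$ whose image lies in $\Omega^2(S_i):=\{x\in e_j\La:\alpha x=0\}$.

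Paralleling the identity $\vf f(\alpha)=0$ of Lemma~\ref{lem:5.1}, the next step is the cancellation $\vf\, f^2(\ba)=0$. Because the $g$-cycle $(\ba\ f^2(\alpha))$ has length two, the arrow $f^2(\alpha)$ is also virtual, so Lemma~\ref{lem:3.2}(a) applied to $f(\alpha)$ yields
$$f(\alpha)\,f(\ba)\,f^2(\ba)=c_{f^2(\alpha)}\,c_{g(\alpha)}\,A_{g(\alpha)}.$$
On the other hand $A_{\alpha}'\,f^2(\ba)=A_{g(\alpha)}$, since $f^2(\ba)=g^{m_{\alpha}n_{\alpha}-1}(\alpha)$ is precisely the arrow that closes the $g$-cycle of $\alpha$. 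Constancy of the parameters on $g$-cycles gives $c_{f^2(\alpha)}=c_{\ba}$ and $c_{g(\alpha)}=c_{\alpha}$, so the two contributions cancel and $\vf\, f^2(\ba)=0$. Consequently $\vf J$ is generated by $\vf\, g(f(\ba))$, and the higher radical layers of $\vf\La$ are produced by iterated multiplication along the $g$-cycle of $f(\alpha)$ (using $g(f(\alpha))=f(\ba)$).

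To finish I would match dimensions. A basis count in $P_j$ (paths starting with either of the non-virtual arrows $f(\alpha)$ or $g(\alpha)$ at $j$, identified at the common socle $S_j$) combined with $\dim\alpha\La=m_{\alpha}n_{\alpha}+1$ gives $\dim\Omega^2(S_i)=m_{f(\alpha)}n_{f(\alpha)}-1$. One then exhibits a basis
$$\{\vf,\ \vf\,g(f(\ba)),\ f(\alpha)f(\ba)g(f(\ba))g^2(f(\ba)),\ldots,\ B_{f(\alpha)}\}$$
of $\vf\La$ of the same size, via a case split on $m_{f(\alpha)}n_{f(\alpha)}$ exactly analogous to cases~(i)--(iii) of Lemma~\ref{lem:5.1}. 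The short-cycle cases require controlling the correction term $A_{\alpha}'\,g(f(\ba))$ by means of relations~(1)--(3) and Lemma~\ref{lem:3.3}, showing it lies in the socle or vanishes; in the intermediate case a scalar factor appears whose nonvanishing is exactly the exclusion of the singular spherical algebra. With $\vf\La=\Omega^2(S_i)$ in hand, $\ker(P_y\twoheadrightarrow\vf\La)$ is identified with $\Omega^{-1}(S_i)$ to give the four-term sequence, and periodicity of period four follows from the cycle $P_i\to P_y\to P_j\to P_i$ of projective covers via $\Omega^4(S_i)\cong S_i$. The main obstacle I anticipate is the basis analysis in the short-cycle case, where the cancellations from Lemma~\ref{lem:3.3} must be carefully tracked in order to isolate and exclude the singular spherical algebra.
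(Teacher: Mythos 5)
Your strategy is the same as the paper's: the same element $\vf = f(\alpha)f(\ba) - c_{\ba}c_{\alpha}A_{\alpha}'$, the identification $\Omega^2(S_i)=\{x\in e_j\La : \alpha x=0\}$ of dimension $m_{f(\alpha)}n_{f(\alpha)}-1$, the key cancellation $\vf f^2(\ba)=0$ (your explicit derivation via Lemma~\ref{lem:3.2}(a) applied to $f(\alpha)$, using that $f^2(\alpha)$ is also virtual, is correct -- the paper simply quotes Lemma~5.6 of \cite{WSA-GV} for this), and the final identification of $\Omega^3(S_i)$ with $f^2(\ba)\La\cong\Omega^{-1}(S_i)$ by a dimension count. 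Everything you actually compute is right.

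The decisive step, however -- showing $\dim \vf\La = m_{f(\alpha)}n_{f(\alpha)}-1$, which is precisely where the singular spherical algebra must be excluded -- is only sketched, and the organization you propose for it is off target. You say you would split on $m_{f(\alpha)}n_{f(\alpha)}$, but the term that has to be controlled is $A_{\alpha}'\gamma$ (with $\gamma=g(f(\ba))$), and its behaviour is governed by the length $m_{\alpha}n_{\alpha}-2$ of $A_{\alpha}'$, i.e.\ by the $g$-cycle of $\alpha$, not that of $f(\alpha)$. The paper's case split is on $m_{\alpha}n_{\alpha}$: for $m_{\alpha}n_{\alpha}=3$ one has $n_{f(\alpha)}\geq 5$ by Observation~\ref{obs:2.3}(1) and $A_{\alpha}'\gamma = g(\alpha)f(g(\alpha)) \equiv A_{f(\alpha)}$, which is neither zero nor in the socle (contrary to your description) but is absorbed into a unit factor $1-\lambda\mu$; for $m_{\alpha}n_{\alpha}=4=n_{\alpha}$, if $f(g(\alpha))$ is virtual one is on the spherical quiver and the nonvanishing of the scalar is exactly the exclusion of the singular spherical algebra, while otherwise $A_{\alpha}'\gamma=0$ by relation~(3); for $m_{\alpha}n_{\alpha}\geq 5$ the term is zero or a socle element by Lemmas~\ref{lem:3.2} and~\ref{lem:4.3}. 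Finally, you should dispose of the degenerate case $y=j$ (the triangular quiver) at the outset, as the paper does by referring that algebra back to Lemma~\ref{lem:5.1}(ii); several of your arrow identifications implicitly assume $y\neq j$.
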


%\bigskip

\begin{proof} 
We may assume that $y\neq j$, otherwise $Q$ is the triangular quiver, and from Example 3.4 in \cite{WSA-GV} we know that the
algebra occurs already in Lemma \ref{lem:5.1}, part (ii).
We identify $\Omega(S_i) = \alpha \La$ and then 
$\Omega^2(S_i) = \{ x\in e_j\La\mid \alpha x = 0\}$. 
We have the following relations in $\La$:
\begin{enumerate}[(i)]
 \item
    $\alpha f(\alpha) = c_{\ba} \ba$,
 \item
    $\ba f(\ba) = c_{\alpha}A_{\alpha}$.
\end{enumerate}
Hence 
$c_{\alpha}A_{\alpha} = \ba f(\ba) 
 = c_{\ba}^{-1} \alpha f(\alpha)f(\ba)$ 
and if we set
$$\vf:= f(\alpha)f(\ba)   - c_{\ba}c_{\alpha} A_{\alpha}'$$
(where $\alpha A_{\alpha}' = A_{\alpha}$), 
then $\vf \La \subseteq \Omega^2(S_i)$.

The module $\Omega^2(S_i)$ has dimension 
$m_{f(\alpha)}n_{f(\alpha)}-1$. 
We want to show that $\vf \La$ has the same dimension.
%Assume first that  $j\neq y$. 
Let $\gamma = g(f(\bar{\alpha}))$ and $\delta = f^{-1}(\gamma)$.

\smallskip

\ As in \cite[Lemma~5.6]{WSA-GV} we have $\vf f^2(\ba)=0$.
Hence $\vf \rad \La$ is generated by $\vf \gamma$.

\smallskip

(a) \ Assume first 
that $m_{\alpha}n_{\alpha}=3$, then $n_{f(\alpha)} \geq 5$ (see Observation~\ref{obs:2.3}(1)). 
The permutation $g$ has a cycle $(\alpha \ g(\alpha) \ f^2(\ba))$
and a cycle $(f(\gamma) \ f(\alpha) \ f(\ba) \ \gamma \ \ldots)$. 

\medskip
We have   $A_{\alpha}' = g(\alpha)$ and 
$$g(\alpha)\gamma = g(\alpha)f(g(\alpha)) = c_{f(\alpha)}A_{f(\alpha)} = f(\alpha)f(\ba)\gamma\mu
$$
with $\mu$ a monomial of length $\geq 1$.
Therefore we can write 
$$\vf\gamma = f(\alpha)f(\ba)\gamma (1-\lambda \mu)
$$
and $1-\lambda\mu$ is a unit in $\La$. 
Moreover, we compute 
$$f(\alpha)f(\ba)\gamma f(\gamma) \equiv  f(\alpha)f(\ba)A_{f^2(\ba)} \ 
\equiv \ f(\alpha)f^2(\alpha)\alpha \equiv B_{f(\alpha)}.
$$
From these it follows that $\vf\La$ has basis
$\{ \vf, f(\alpha)f(\ba) \gamma, \ldots, B_{f(\alpha)}\}$ of size 
$m_{f(\alpha)}n_{f(\alpha)} -1$, as required.

\bigskip

(b)  Assume $m_{\alpha}n_{\alpha} = 4 = n_{\alpha}$ so that 
$\vf = f(\alpha)f(\ba) - c_{\ba}c_{\alpha} g(\alpha)g^2(\alpha)$. 

Consider first the case where in addition $f(g(\alpha))$ is virtual, then it cannot be a loop
(otherwise $Q$ would not be 2-regular). 
We find that the quiver is then the spherical quiver, which has
two $g$-cycles of length four and two pairs of virtual arrows. 

Let $\gamma = f(g^2(\alpha))$. Then one finds  
$\vf\gamma  = f(\alpha)f(\ba)\gamma - \lambda g(\alpha)f(g(\alpha)$ for a non-zero scalar. As before this can be written as
$$\vf\gamma = f(\alpha)f(\ba)\gamma(1-a\mu)
$$
with $(1-a\mu)$ a unit (we exclude the singular spherical algebra).  Then  one gets a basis for $\vf\La$ as in part (a), of
the right size.

If $f(g(\alpha))$ is not virtual then $A_{\alpha}'\gamma = g(\alpha)g^2(\alpha) f(g^2(\alpha))$ which is zero, and again
$\vf\La$ has the required dimension.

(c) Now assume that $m_{\alpha}n_{\alpha}\geq 5$, then $A_{\alpha}'\gamma = \mu g^{-1}(\beta) \beta f(\beta)$ where $\beta$ is
the last arrow in $A_{\alpha}'$, and where $\mu$ has length $\geq 2$. Now either $g^{-1}(\beta)\beta f(\beta)=0$, or it is a non-zero scalar multiple
of some 
$A_{\delta}$ by Lemma \ref{lem:3.2},  and then $J^2A_{\delta}=0$ by Lemma \ref{lem:4.3}. It follows as before that $\vf\La$ has dimension $m_{f(\alpha)}n_{f(\alpha)}-1$.
\bigskip

Since $\vf f^2(\ba)=0$ we know $\Omega^3(S_i)$ contains $f^2(\ba)\La$ 
and this is isomorphic to $\Omega^{-1}(S_i)$. 
One sees that they have the same dimension and hence
are equal. 
\end{proof}

\bigskip

\subsection{Section 6}

The only changes needed are as follows.

(1) \ In Lemma 6.4, the zero relations must be
$$\theta f(\theta)g(f(\theta)) = 0 \ \mbox{for} \ \theta \in Q_1\setminus\{ \beta, \delta, \omega\} \ \mbox{ and } \ 
\theta g(\theta)f(g(\theta)) = 0 \ \mbox{for} \ \theta \in Q_1\setminus\{ \alpha, \sigma, \delta\}.
$$

(2) \ In Lemma 6.5, the zero relations must be
$$\theta f(\theta)g(f(\theta)) = 0 \ \mbox{for} \ \theta \in Q_1\setminus\{ \beta, \delta, \mu, \omega\} \ \mbox{ and } \ 
\theta g(\theta)f(g(\theta)) = 0 \ \mbox{for} \ \theta \in Q_1\setminus\{ \alpha, \sigma, \varepsilon,  \delta\},
$$
and also  $\mu\varepsilon\mu=0$ and $\varepsilon\mu\varepsilon=0$ if $r\geq 3$.

\bigskip

\end{document}